\newcommand*{\mailto}[1]{\href{mailto:#1}{\nolinkurl{#1}}}
\newcommand{\arxiv}[1]{\href{http://arxiv.org/abs/#1}{arXiv:#1}}
\newtheorem{theorem}{Theorem}[section]
\newtheorem{lemma}[theorem]{Lemma}
\newtheorem{corollary}[theorem]{Corollary}
\newtheorem{remark}[theorem]{Remark}
\newtheorem{hypothesis}[theorem]{Hypothesis}
\newcommand{\R}{{\mathbb R}}
\newcommand{\N}{{\mathbb N}}
\newcommand{\C}{{\mathbb C}}
\newcommand{\OO}{\mathcal{O}}
\newcommand{\be}{\begin{equation}}
\newcommand{\ee}{\end{equation}}
\newcommand{\ba}{\begin{array}}
\newcommand{\ea}{\end{array}}
\newcommand{\spr}[2]{\langle #1 , #2 \rangle}
\newcommand{\id}{{\rm 1\hspace{-0.6ex}l}}
\newcommand{\E}{\mathrm{e}}
\newcommand{\I}{\mathrm{i}}
\newcommand{\loc}{\mathrm{loc}}
\newcommand{\im}{\mathrm{Im}}
\newcommand{\re}{\mathrm{Re}}
\newcommand{\dom}{\mathfrak{D}}
\newcommand{\hil}{\mathfrak{H}}
\newcommand{\floor}[1]{\lfloor#1 \rfloor}
\newcommand{\ceil}[1]{\lceil#1 \rceil}
\newcommand{\eps}{\varepsilon}
\newcommand{\sig}{\sigma}
\newcommand{\lam}{\lambda}
\newcommand{\gam}{\gamma}
\numberwithin{equation}{section}
\begin{document}

\title[Spectral Deformations for Dirac Operators]{On Spectral Deformations and Singular Weyl Functions for One-Dimensional Dirac Operators}

\author[A.\ Beigl]{Alexander Beigl}
\address{Faculty of Mathematics\\ University of Vienna\\
Oskar-Morgenstern-Platz 1\\ 1090 Wien\\ Austria}
\email{\mailto{alex_beigl@gmx.at}}

\author[J.\ Eckhardt]{Jonathan Eckhardt}
\address{School of Computer Science \& Informatics\\ Cardiff University\\ Queen's Buildings \\ 5 The Parade\\ Roath \\ Cardiff CF24 3AA\\ Wales \\ UK}
\email{\mailto{EckhardtJ@cardiff.ac.uk}}

\author[A.\ Kostenko]{Aleksey Kostenko}
\address{Faculty of Mathematics\\ University of Vienna\\
Oskar-Morgenstern-Platz 1\\ 1090 Wien\\ Austria}
\email{\mailto{duzer80@gmail.com};\mailto{Oleksiy.Kostenko@univie.ac.at}}

\author[G.\ Teschl]{Gerald Teschl}
\address{Faculty of Mathematics\\ University of Vienna\\
Oskar-Morgenstern-Platz 1\\ 1090 Wien\\ Austria\\ and International
Erwin Schr\"odinger
Institute for Mathematical Physics\\ Boltzmanngasse 9\\ 1090 Wien\\ Austria}
\email{\mailto{Gerald.Teschl@univie.ac.at}}
\urladdr{\url{http://www.mat.univie.ac.at/~gerald/}}

\thanks{J. Math. Phys. {\bf 56}, 012102 (2015)}
\thanks{{\it Research supported by the Austrian Science Fund (FWF) under Grants No.\ J3455, P26060, and Y330}}

\keywords{Dirac operators, spectral theory}
\subjclass[2010]{Primary 34B20, 34L40; Secondary 34L05, 34B30}

\begin{abstract}
We investigate the connection between singular Weyl--Titchmarsh--Kodaira theory and the double commutation method for one-dimensional Dirac operators. In particular, we compute the singular Weyl function of the commuted operator in terms of the data from the original operator. These results are then applied to radial Dirac operators in order to show that the singular Weyl function of such an operator belongs to a generalized Nevanlinna class $N_{\kappa_0}$ with $\kappa_0=\lfloor|\kappa| + \frac{1}{2}\rfloor$, where $\kappa\in \mathbb{R}$ is the corresponding angular momentum.
\end{abstract}

\maketitle

\section{Introduction}
\label{sec:int}

Weyl--Titchmarsh--Kodaira theory is the fundament of direct and inverse spectral theory for ordinary differential operators. The classical theory
usually assumes that one endpoint is regular. However, it has been shown by Kodaira \cite{ko}, Kac \cite{ka} and more recently by Fulton \cite{ful08},
Gesztesy and Zinchenko \cite{gz}, Fulton and Langer \cite{fl}, Kurasov and Luger \cite{kl}, and Kostenko, Sakhnovich, and Teschl
\cite{kst}, \cite{kst2}, \cite{kst3}, \cite{kt} that many aspects of this classical theory still can be established at a
singular endpoint. It has recently proven to be a powerful tool for inverse spectral theory for these operators
and further refinements were given in \cite{je}, \cite{je2}, \cite{egnt2}, \cite{egnt3}, \cite{CHPencil}, \cite{et}, \cite{kt2}. The analogous
theory for one-dimensional Dirac operators was developed by Brunnhuber and three of us in \cite{singdirac} (for further extensions see also \cite{ekt}, \cite{ekt2}).
Nevertheless, such operators are still difficult to understand.

One approach, originating from ideas of Krein \cite{kr57}, is to use spectral deformation methods to reduce a given spectral problem to
a simpler one. In the case of one-dimensional Schr\"odinger operators there is by now an enormous literature on this subject and
we refer to Kostenko, Sakhnovich, and Teschl \cite{kst3} for further references. Moreover, in \cite{kst3} the connection between these methods and
singular Weyl--Titchmarsh--Kodaira theory was established and it is our present aim to do the same for Dirac operators.
However, here the situation is slightly different. In fact, since Dirac operators are not bounded from below, a factorization
of the type $A^* A$ is not possible and hence there is no analog of the classical Crum--Darboux method for Dirac operators.
However, an analog of the double commutation method was established by Teschl \cite{doubledirac}. Moreover, this method
has been used by Albeverio, Hryniv, and Mykytyuk \cite{ahm2} to reduce the inverse spectral problem of radial Dirac operators on
a compact interval to the case of regular operators following the general idea of Krein \cite{kr57}. Here we will further extend these
results in a more general setting. As an application we will find a representation formula for the singular Weyl function
of radial Dirac operators and show that it is in a generalized Nevanlinna class $N_{\kappa_0}$ with $\kappa_0=\floor{|\kappa|+\frac{1}{2}}$,
extending the results from \cite{singdirac}.

\section{Singular Weyl--Titchmarsh--Kodaira Theory}
\label{sec:singth}

Let $I=(a,b) \subseteq \R$ (with $-\infty \le
a < b \le \infty$) be  an arbitrary interval. We will be concerned with Dirac operators in the Hilbert space $L^2(I,\C^2)$ equipped with the inner product
\be
\spr{f}{g}=\int_a^b f(y)^* g(y) \,dy, \quad \|f\|^2 = \spr{f}{f}.
\ee
 To this end, we consider the differential expression
\be\label{dirac}
\tau = \frac{1}{\I} \sig_2 \frac{d}{dx} + Q(x).
\ee
Here the potential matrix $Q(x)$ is given by
\be\label{diracQ}
Q(x) =  q_{\rm el}(x)\id  + q_{\rm am}(x)\sig_1 +
(m+ q_{\rm sc}(x)) \sig_3,
\ee
where $\sig_1$, $\sig_2$, $\sig_3$ denote the Pauli matrices
\be
\sig_1=\begin{pmatrix} 0 & 1 \\ 1 & 0\end{pmatrix}, \quad
\sig_2=\begin{pmatrix} 0 & -\I \\ \I & 0\end{pmatrix}, \quad
\sig_3=\begin{pmatrix} 1 & 0 \\ 0 & -1\end{pmatrix},
\ee
and $m$, $q_{\rm sc}$, $q_{\rm el}$, and $q_{\rm am}$ are interpreted as
mass, scalar potential, electrostatic potential, and anomalous
magnetic moment, respectively (see \cite[Chapter~4]{th}).
As usual, we require that $m\in[0,\infty)$ and that $q_{\rm sc}$, $q_{\rm el}$, $q_{\rm
am}
\in L^1_{\loc}(I)$ are real-valued. 

We do not include a magnetic moment $\tilde{\tau} = \tau +\sig_2 q_{\rm mg}(x)$
as it can be easily eliminated by a simple gauge transformation
$\tau = \Gamma^{-1} \tilde{\tau} \Gamma$, where $\Gamma =\exp(-\I\int^x q_{\rm mg}(r) dr)$.

If $\tau$ is in the limit point case at both $a$ and $b$, then $\tau$ gives rise to a unique
self-adjoint operator $H$ when defined maximally (cf., e.g., \cite{ls},
\cite{wdl}, \cite{wei2}). Otherwise, we fix a boundary
condition at each endpoint where $\tau$ is in the limit circle case.
Explicitly, such an operator $H$ is given by
\begin{align}\begin{split}
H: \ba[t]{lcl} \dom(H) &\to& L^2(I,\C^2) \\ f &\mapsto& \tau f \ea
\end{split}\end{align}
where
\begin{align}\begin{split} \label{domH}
\dom(H) = \{ f \in L^2(I,\C^2) \,|\, &  f \in AC_{\loc}(I,\C^2), \, \tau f \in
L^2(I,\C^2),\\ & \qquad\qquad W_a(u_-,f) = W_b(u_+,f) =0 \},
\end{split}\end{align}
with
\be
W_x(f,g) = \I \spr{f^*(x)}{\sig_2 g(x)} = f_1(x) g_2(x) - f_2(x) g_1(x)
\ee
the usual Wronskian (we remark that the limit $W_{a,b}(.,..) = \lim_{x \to
a,b} W_x(.,..)$ exists for functions as in (\ref{domH})). Here the function
$u_-$ (resp.\ $u_+$) used to generate the boundary condition at $a$
(resp.\ $b$) can be chosen to be a non-trivial solution of $\tau u =0$ if $\tau$
is in the limit circle case at $a$ (resp.\ $b$) and zero else.

For a given point $c\in I$ consider the operators $H^D_{(a,c)}$ and $H^D_{(c,b)}$
which are obtained by restricting $H$ to $(a,c)$ and $(c,b)$ with a Dirichlet boundary condition $f_1(c)=0$ at
$c$, respectively. The corresponding operators with a Neumann boundary condition $f_2(c)=0$ will be
denoted by $H^N_{(a,c)}$ and $H^N_{(c,b)}$.

Our main object will be singular Weyl--Titchmarsh--Kodaira theory which, for Dirac operators, was developed only recently in \cite{singdirac}.
Hence we will start by reviewing some relevant facts from  \cite{singdirac}. We will need a system of real entire solutions $\Phi(z,x)$, $\Theta(z,x)$ of the underlying homogeneous equation $\tau u = zu$ such that $\Phi(z,x)$ lies in the domain of $H$ near $a$ and $W(\Theta(z),\Phi(z))=1$.
To this end we introduce the following hypothesis:

\begin{hypothesis}\label{hypo}
Suppose that the spectrum of $H_{(a,c)}^D$ is purely discrete for one (and hence for all) $c\in(a,b)$.
\end{hypothesis}

\begin{lemma}[{\cite[Lemma 2.2]{singdirac}}]\label{tfae} The following are equivalent:
\begin{enumerate}[label=(\roman*), ref=(\roman*), leftmargin=*, widest=iiii] 
\item The spectrum of $H^D_{(a,c)}$ is purely discrete for some $c\in I$.
\item There is a real entire solution $\Phi(z,x)$ that is non-trivial and lies in the domain of $H$ near $a$ for each $z\in\C$.
\item There are real entire solutions $\Theta(z,x),\Phi(z,x)$ with $W(\Theta(z),\Phi(z))=1$, such that $\Phi(z,x)$ is non-trivial and lies in the domain of $H$ near $a$ for each $z\in\C$.
\end{enumerate}
\end{lemma}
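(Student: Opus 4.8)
The plan is to establish the cyclic chain of implications $(iii)\Rightarrow(ii)\Rightarrow(i)\Rightarrow(iii)$. The implication $(iii)\Rightarrow(ii)$ is trivial, since the pair in $(iii)$ already contains a solution $\Phi(z,x)$ with the required properties. For $(ii)\Rightarrow(i)$, I would argue as follows. Fix $c\in I$ and recall that $H^D_{(a,c)}$ has purely discrete spectrum if and only if its resolvent is compact, which in turn (since we are on a half-line type interval $(a,c)$ with a regular endpoint at $c$) is equivalent to the absence of essential spectrum. If $z\notin\sigma(H^D_{(a,c)})$, then the resolvent can be written as an integral operator whose kernel is built from $\Phi(z,\cdot)$ (the solution lying in $\dom(H)$ near $a$, hence satisfying the boundary condition at $a$) and the solution satisfying the Dirichlet condition at $c$. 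Entirety of $\Phi(z,x)$ in $z$, locally uniformly in $x$, forces the resolvent to depend analytically on $z$ on all of $\C$ minus a discrete set (the zeros of the relevant Wronskian, which is entire and not identically zero because for $z$ in the resolvent set the two solutions are linearly independent). An operator-valued function that is analytic on $\C$ away from a discrete set of poles, and which is a resolvent, must have purely discrete spectrum; alternatively one invokes that the Green's function being entire in $z$ except at the eigenvalues shows $\sigma(H^D_{(a,c)})$ is discrete. I would also note that discreteness for one $c$ implies it for all $c$ by a standard interlacing/finite-rank perturbation argument (changing $c$ amounts to a finite-rank perturbation of the resolvent in the appropriate sense, cf.\ the decomposition $H = H^D_{(a,c)}\oplus H^D_{(c,b)}$ up to rank-one).

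For the main implication $(i)\Rightarrow(iii)$, I would proceed constructively. Assuming $\sigma(H^D_{(a,c)})$ is purely discrete, pick a real base point $\lambda_0\in\R\setminus\sigma(H^D_{(a,c)})$. Let $\Phi(\lambda_0,x)$ be a real solution of $\tau u=\lambda_0 u$ lying in $\dom(H)$ near $a$ (this exists and is unique up to a real scalar since we are in the limit point/prescribed boundary condition situation at $a$), normalized so that together with some companion solution $\Theta(\lambda_0,x)$ one has $W(\Theta(\lambda_0),\Phi(\lambda_0))=1$. The idea, following the Schr\"odinger case treated in \cite{kst3} and the references on singular Weyl theory, is to define $\Phi(z,x)$ for all $z$ via the entire solution obtained by solving the variation-of-parameters integral equation with the given asymptotic behavior at $a$; concretely, one uses the resolvent of $H^D_{(a,c)}$ to transport the boundary data, setting
\be
\Phi(z,x) = \Phi(\lambda_0,x) + (z-\lambda_0)\int_a^c G^D_{(a,c)}(z,x,y)\,\Phi(\lambda_0,y)\,dy,
\ee
where $G^D_{(a,c)}(z,x,y)$ is the Green's function of $H^D_{(a,c)}$. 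Since the resolvent $(H^D_{(a,c)}-z)^{-1}$ is a meromorphic (in fact, because the spectrum is discrete and the resolvent compact, a finitely-meromorphic) operator-valued function with poles only at the eigenvalues, and since $\lambda_0$ is not an eigenvalue, a Mittag-Leffler/Weierstrass type argument — or more simply a direct power series estimate as in \cite{singdirac} — shows that $\Phi(z,x)$ extends to an entire function of $z$ for each fixed $x$, and is real for real $z$ because all data are real. By construction $\Phi(z,\cdot)$ solves $\tau u=zu$ near $a$ and inherits the correct (limit-point or boundary-condition) behavior at $a$, so it lies in $\dom(H)$ near $a$. It is non-trivial because $\Phi(\lambda_0,x)\not\equiv 0$ and, for $z$ close to $\lambda_0$, the correction term is small; for general $z$ non-triviality follows since a solution of a linear first-order system vanishing at one point vanishes identically, contradicting the initial data at $x=c$ which depends analytically on $z$ and is nonzero at $z=\lambda_0$. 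Finally, to obtain $\Theta(z,x)$, I fix initial conditions at the regular point $c$, say $\Theta(z,c)$ determined by $\Theta_1(z,c)=\Phi_2(z,c)$, $\Theta_2(z,c)=-\Phi_1(z,c)$ suitably normalized so that $W_c(\Theta(z),\Phi(z))=1$; since the Wronskian of two solutions is $x$-independent, this gives $W(\Theta(z),\Phi(z))=1$ for all $x$. Entirety of $\Theta(z,x)$ in $z$ follows from smooth (entire) dependence of solutions of the initial value problem on the spectral parameter, together with entirety of the initial data $\Theta(z,c)$, which is built from the entire function $\Phi(z,c)$; one has to check that $\Phi(z,c)\neq 0$ as a vector — but $\Phi(z,\cdot)$ is a nonzero solution, so $(\Phi_1(z,c),\Phi_2(z,c))\neq(0,0)$, and the normalization can be chosen entire in $z$.

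The main obstacle is the construction in $(i)\Rightarrow(iii)$: namely, showing that the solution $\Phi(z,x)$, which is only pinned down by a boundary condition at the \emph{singular} endpoint $a$ (where there is no initial value problem to solve), genuinely extends to an \emph{entire} function of $z$ rather than just a meromorphic one. This is where the hypothesis that $\sigma(H^D_{(a,c)})$ is purely discrete is essential: it guarantees the resolvent is a meromorphic operator-valued function on all of $\C$ (no essential spectrum obstructing analytic continuation), and a careful analysis of the residues at the eigenvalues — using that $\Phi(\lambda_0,\cdot)$, being in the domain near $a$ but not globally (as $\lambda_0$ is not an eigenvalue), is orthogonal to none of the eigenfunctions in a way that would create a pole, or rather that the apparent poles cancel because $\Phi(z,x)$ must remain a bona fide solution — shows the putative singularities are removable. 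Handling this removability cleanly, as opposed to the routine ODE-dependence and Wronskian bookkeeping for $\Theta$, is the crux; everything else is standard limit-point/limit-circle theory and analytic dependence of ODE solutions on parameters, exactly parallel to the Schr\"odinger case in \cite{kst3} and already carried out for Dirac operators in \cite{singdirac}.
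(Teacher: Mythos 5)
Your handling of (iii)$\Rightarrow$(ii) and (ii)$\Rightarrow$(i) is fine in outline: the Green's function of $H^D_{(a,c)}$ built from $\Phi(z,\cdot)$ and the (entire, by standard parameter dependence) solution with Dirichlet data at $c$ is meromorphic in $z$, and Stone's formula then confines $\sigma(H^D_{(a,c)})$ to the discrete zero set of the corresponding Wronskian. The genuine gap is in (i)$\Rightarrow$(iii). The formula
\be
\Phi(z,x)=\Phi(\lambda_0,x)+(z-\lambda_0)\bigl[(H^D_{(a,c)}-z)^{-1}\Phi(\lambda_0,\cdot)\bigr](x)
\ee
does produce, for each $z$ off the spectrum, a solution of $\tau u=zu$ satisfying the boundary condition near $a$, but the resulting family is \emph{genuinely meromorphic}, and the singularities are \emph{not} removable. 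Indeed, at a simple eigenvalue $E$ of $H^D_{(a,c)}$ with normalized eigenfunction $\psi_E$ the resolvent contributes $\langle\psi_E,\Phi(\lambda_0)\rangle\psi_E/(E-z)$, and the Lagrange identity gives $(E-\lambda_0)\langle\psi_E,\Phi(\lambda_0)\rangle=W_c(\psi_E,\Phi(\lambda_0))=-\psi_{E,2}(c)\Phi_1(\lambda_0,c)$, which is nonzero precisely because $\lambda_0\notin\sigma(H^D_{(a,c)})$ (so $\Phi_1(\lambda_0,c)\neq0$) and $\psi_E\not\equiv0$. Hence the residue of your $\Phi(z,x)$ at $z=E$ equals $\psi_{E,2}(c)\Phi_1(\lambda_0,c)\,\psi_E(x)\neq0$; there is no cancellation to be hoped for, and ``$\Phi$ must remain a bona fide solution'' is no obstruction, since the residue is itself a solution of $\tau u=Eu$. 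The correct construction does not remove poles but cancels them by renormalization: write the Weyl solution of the left interval as $u_-(z,x)=\theta(z,x)+m_-(z)\varphi(z,x)$ in a real entire fundamental system based at the regular point $c$, observe that discreteness of $\sigma(H^D_{(a,c)})$ makes $m_-$ meromorphic, choose by the Weierstrass product theorem a real entire $D(z)$ whose zeros are exactly the poles of $m_-$ with matching orders, and set $\Phi(z,x):=D(z)u_-(z,x)=D(z)\theta(z,x)-N(z)\varphi(z,x)$ with $N:=-Dm_-$ entire; at a pole $E$ one checks $\Phi(E,\cdot)=-N(E)\varphi(E,\cdot)$ is a nonzero multiple of the eigenfunction, hence still nontrivial and in the domain near $a$. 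Note that this renormalization deliberately introduces zeros of the prefactor and is therefore \emph{not} of the admissible gauge form $\E^{g(z)}\Phi$ --- which is exactly why the statement requires proof.

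A secondary defect: your $\Theta$ is not entire as written. Imposing $\Theta(z,c)=s(z)\,(\Phi_2(z,c),-\Phi_1(z,c))^\top$ with $W_c(\Theta(z),\Phi(z))=1$ forces $s(z)=(\Phi_1(z,c)^2+\Phi_2(z,c)^2)^{-1}$, and $\Phi_1^2+\Phi_2^2$ can vanish at non-real $z$ even though $(\Phi_1,\Phi_2)\neq(0,0)$ there, so $s$ is merely meromorphic. The standard repair is to write $\Phi(z,x)=\alpha(z)\theta(z,x)+\beta(z)\varphi(z,x)$ with $\alpha,\beta$ real entire and without common zeros (non-triviality of $\Phi$), and to solve the B\'ezout identity $\gamma\beta-\delta\alpha=1$ with real entire $\gamma,\delta$ (possible since finitely generated ideals of entire functions are principal); then $\Theta:=\gamma\theta+\delta\varphi$ is real entire with $W(\Theta(z),\Phi(z))=1$.
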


Assuming Hypothesis \ref{hypo} we can introduce the singular Weyl function
\be\label{Mdef}
M(z)=-\frac{W(\Theta(z),u_+(z))}{W(\Phi(z),u_+(z))}
\ee
such that the solution which is in the domain of $H$ near $b$ is given by
\be\label{weylpsi}
\Psi(z,x)=\Theta(z,x)+M(z)\Phi(z,x).
\ee
Here, $u_+(z)$ is the (up to scalar multiples unique) non-trivial solution of $\tau u = zu$ which lies in the domain of $H$ near $b$. 
We stress the fact that there is no natural choice of a fundamental system $\Phi$ and $\Theta$. The regular solution can be multiplied by a (zero free) real entire function, $\tilde{\Phi}(z,x)=\E^{g(z)}\Phi(z,x)$ for some real entire function $g$. Due to the requirement that the Wronskian has to be normalized, the singular solution needs to be of the form $\tilde{\Theta}(z,x)=\E^{-g(z)}\Theta(z,x)-f(z)\Phi(z,x)$, where $f$ is again some real entire function. This will change the Weyl function according to
\be
\tilde{M}(z)=\E^{-2g(z)}M(z)+\E^{g(z)}f(z).
\ee
Associated with $M(z)$ is a corresponding spectral measure $\rho$ given by the Stieltjes--Liv\v{s}i\'{c} inversion formula
\be\label{defrho}
\frac{1}{2} \left( \rho\big((\lam_0,\lam_1)\big) + \rho\big([\lam_0,\lam_1]\big) \right)=
\lim_{\eps\downarrow 0} \frac{1}{\pi} \int_{\lam_0}^{\lam_1} \im\big(M(\lam+\I\eps)\big) d\lam.
\ee
Then there exists a spectral transformation which maps the Dirac operator $H$ in $L^2(I,\C^2)$ to the multiplication operator
with the independent variable in $L^2(\R,d\rho)$. Conversely, $M(z)$ can be reconstructed from $\rho$ up to an entire function.

\begin{theorem}[{\cite[Theorem 4.1]{kst2}}]\label{IntR}
Let $M(z)$ be a singular Weyl function and $\rho$ its associated spectral measure. Then there exists
an entire function $g(z)$ such that $g(\lam)\ge 0$ for $\lam\in\R$ and $\E^{-g(\lam)}\in L^2(\R, d\rho)$.

Moreover, for any entire function $\hat{g}(z)$ such that $\hat{g}(\lam)>0$ for all $\lam\in\R$ and $(1+\lam^2)^{-1} \hat{g}(\lam)^{-1}\in L^1(\R, d\rho)$
(e.g.\ $\hat{g}(z)=\E^{2g(z)}$) we have the integral representation
\be\label{Mir}
M(z) = E(z) + \hat{g}(z) \int_\R \left(\frac{1}{\lam-z} - \frac{\lam}{1+\lam^2}\right) \frac{d\rho(\lam)}{\hat{g}(\lam)},
\qquad z\in\C\backslash\sig(H),
\ee
where $E(z)$ is a real entire function.
\end{theorem}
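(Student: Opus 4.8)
The plan is to follow \cite[Theorem~4.1]{kst2}, where the analogous statement is proved for one-dimensional Schr\"odinger operators, paying attention to the places where the first-order structure enters.

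\emph{Step 1: existence of $g$.} I would first fix an interior point $x_0\in(a,b)$ and record the key summability estimate
\[
\int_\R\frac{|\Phi(\lam,x_0)|^2}{1+\lam^2}\,d\rho(\lam)\le\im\bigl(\trace G(\I,x_0,x_0)\bigr)<\infty ,
\]
where $G(z,\cdot,\cdot)$ is the (jointly continuous, since all solutions of $\tau u=zu$ are locally absolutely continuous) Green's function of $(H-z)^{-1}$. This follows by approximating the formal vectors $\delta_{x_0}\otimes e_j$, $j=1,2$, by $L^2$-functions, transporting to $L^2(\R,d\rho)$ via the generalized Fourier transform $f\mapsto\hat f(\lam)=\int_a^b\Phi(\lam,x)^*f(x)\,dx$ of \cite{singdirac}, applying Fatou's lemma, and using $\im\spr{(H-\I)^{-1}f}{f}=\spr{(H^2+\id)^{-1}f}{f}\ge0$. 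Now $|\Phi(\lam,x_0)|^2=\Phi_1(\lam,x_0)^2+\Phi_2(\lam,x_0)^2$ extends to a real entire function of $\lam$ that is strictly positive on $\R$: by Lemma~\ref{tfae} the solution $\Phi(\lam,\cdot)$ is non-trivial, and a non-trivial solution of a first-order linear system has no zeros. Hence $\lam\mapsto\frac12\log\bigl((1+\lam^2)/|\Phi(\lam,x_0)|^2\bigr)$ is continuous on $\R$, and I would construct an entire function $g$ of the form $g(z)=c_0+\sum_{k\ge1}(z/r_k)^{2n_k}$ (with $r_k\uparrow\infty$ and the $n_k\in\N$ chosen large enough) that is nonnegative on $\R$ and dominates this function there. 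Then $\E^{-2g(\lam)}\le|\Phi(\lam,x_0)|^2/(1+\lam^2)$, so $\E^{-g}\in L^2(\R,d\rho)$; in particular $\hat g=\E^{2g}$ satisfies the hypotheses of the second part.

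\emph{Step 2: the integral representation.} Given an admissible $\hat g$, the condition $(1+\lam^2)^{-1}\hat g(\lam)^{-1}\in L^1(\R,d\rho)$ makes
\[
\widetilde M(z):=\int_\R\Bigl(\frac1{\lam-z}-\frac\lam{1+\lam^2}\Bigr)\frac{d\rho(\lam)}{\hat g(\lam)}
\]
a well-defined Herglotz function with representing measure $\hat g^{-1}d\rho$, so $\frac1\pi\im\widetilde M(\lam+\I\eps)\,d\lam\to\hat g(\lam)^{-1}d\rho(\lam)$ vaguely as $\eps\downarrow0$. I would then set $E(z):=M(z)-\hat g(z)\widetilde M(z)$, which is analytic on $\C\setminus\sig(H)$ with $E(\bar z)=\overline{E(z)}$, and aim to prove that $E$ continues to a real entire function; granting this, $M(z)=E(z)+\hat g(z)\widetilde M(z)$ is exactly \eqref{Mir}, and by Step~1 the choice $\hat g=\E^{2g}$ is admissible. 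To continue $E$: since $\hat g$ is entire and real on $\R$ one has $\re\hat g(\lam+\I\eps)\to\hat g(\lam)$ locally uniformly and $\im\hat g(\lam+\I\eps)=O(\eps)$, and combining this with the standard fact $\eps\,\re\widetilde M(\lam+\I\eps)\to0$ in $L^1_{\loc}(\R)$ gives $\frac1\pi\im\bigl(\hat g\widetilde M\bigr)(\lam+\I\eps)\,d\lam\to d\rho$ vaguely --- the same vague limit that $\frac1\pi\im M(\lam+\I\eps)\,d\lam$ has by \eqref{defrho}. Thus $\im E$, which is harmonic off $\R$ and odd under $z\mapsto\bar z$, has vanishing distributional boundary values; using in addition the a priori local bounds for $M$ near $\R$ supplied by the singular Weyl--Titchmarsh theory of \cite{singdirac} (which make $\im E$ of at most polynomial growth near $\R$), a distributional Schwarz reflection then yields the entirety of $E$.

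\emph{Main obstacle.} The crux is exactly this last point --- turning ``$\im E$ has vanishing boundary measure'' into ``$E$ extends across $\sig(H)$.'' The robust route, and the one taken in \cite{kst2}, is to exhaust $I$ by intervals $(a_n,b)$, $a_n\downarrow a$: the corresponding operators have a regular endpoint at $a_n$ and hence classical Herglotz Weyl functions $m_n$ with representing measures $\rho_n$, and $M$ is an $n$-dependent M\"obius transform of $m_n$ with entire coefficients; inserting the Herglotz representation of $m_n$ and passing to the limit (with $\hat g$ absorbing the divergence of the normalizing factors) produces \eqref{Mir} and simultaneously the entirety of $E$. The remaining ingredients --- the Fatou step and the continuity of $G$ in Step~1, and $\eps\,\re\widetilde M(\lam+\I\eps)\to0$ in Step~2 --- are routine, the first two being immediate for Dirac operators from the absolute continuity of solutions, the last being general Herglotz theory.
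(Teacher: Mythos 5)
This theorem is not proved in the paper at all: it is imported verbatim from \cite[Theorem 4.1]{kst2}, so there is no in-text argument to compare against, and your proposal is in effect a reconstruction of the cited proof. As such it is essentially correct and follows the same route: Step~1 reproduces the key summability bound (in \cite{singdirac} this is the identity $\im\bigl(\Phi(\I,x)^\top\Psi(\I,x)\bigr)=\int_\R|\Phi(\lam,x)|^2(1+\lam^2)^{-1}d\rho(\lam)$, i.e.\ the trace of the diagonal matrix Green's function), and your observation that for a first-order system $|\Phi(\lam,x_0)|^2$ is automatically positive on $\R$ is a genuine simplification over the Schr\"odinger case, where one must instead use $\int_{x_0}^{x_1}\Phi(\lam,x)^2dx$ to avoid zeros. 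The only soft spot is the passage from ``$\im E$ has vanishing boundary measure'' to ``$E$ is entire'': the a priori local bound $|M(\lam+\I\eps)|\le C\eps^{-N}$ needed for the distributional reflection is not free, and you correctly identify this as the crux and defer to the structural argument (exhaustion by regular problems, equivalently the Herglotz representation of the diagonal Green's function) actually used in \cite{kst2}; that step is named rather than carried out, which matches the level of detail the paper itself provides (none), but would need to be written out for a self-contained proof.
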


If the endpoint $a$ is regular, or at least limit circle, then, with the usual choice for $\Phi$ and $\Theta$,
the Weyl function is a Herglotz--Nevanlinna function and we can choose
$\hat{g}(z)\equiv 1$ and $E(z)=\re(M(\I))$ in the previous theorem. However, this will not be true in general.
The following theorem gives a criterion when the singular Weyl function belongs to the class $N_\kappa^\infty$ of generalized Nevanlinna functions with no non-real poles and the only generalized pole of nonpositive type at $\infty$ (for further information on generalized Nevanlinna functions we refer to \cite{luger}, see also \cite[Appendix B]{kt}).

\begin{theorem}[{\cite[Theorem 4.3]{kst2}}]\label{thm:nkap}
Fix the solution $\Phi(z,x)$ and $k\in\N\cup\{0\}$. Then there is a corresponding solution $\Theta(z,x)$ such that $M(z)\in N_\kappa^\infty$
for some $\kappa\le k$ if and only if $(1+\lam^2)^{-k-1} \in L^1(\R,d\rho)$. Moreover, $\kappa=k$ if $k=0$ or
$(1+\lam^2)^{-k} \not\in L^1(\R,d\rho)$.
\end{theorem}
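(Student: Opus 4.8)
The plan is to deduce the statement from the integral representation in Theorem~\ref{IntR} together with the structure theory of generalized Nevanlinna functions (see \cite{luger} and \cite[Appendix~B]{kt}), the organizing remark being that the spectral measure $\rho$ is independent of the choice of $\Theta$. Indeed, with $\Phi$ fixed the only remaining freedom is to replace $\Theta$ by $\Theta-f(z)\Phi$ for a real entire $f$, under which $M(z)$ changes to $M(z)+f(z)$; since $\im f(\lambda+\I\eps)\to 0$ locally uniformly as $\eps\downarrow 0$, the Stieltjes--Liv\v{s}i\'{c} formula \eqref{defrho} is unaffected, so $\rho$ depends only on the pair $(H,\Phi)$. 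This invariance is what will let us pin down the exact value of $\kappa$ rather than merely a bound.

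For the ``if'' direction, assume $(1+\lambda^2)^{-k-1}\in L^1(\R,d\rho)$. Then $\hat g(z)=(1+z^2)^k$ is admissible in Theorem~\ref{IntR}, since $(1+\lambda^2)^{-1}\hat g(\lambda)^{-1}=(1+\lambda^2)^{-k-1}$ is $\rho$-integrable, and it yields, for the current $\Theta$, a representation $M(z)=E(z)+(1+z^2)^k G(z)$ with $E$ real entire and
\[
G(z)=\int_\R\Big(\frac{1}{\lambda-z}-\frac{\lambda}{1+\lambda^2}\Big)\frac{d\rho(\lambda)}{(1+\lambda^2)^k}.
\]
Replacing $\Theta$ by $\Theta-E(z)\Phi$ produces a Weyl function equal to $(1+z^2)^k G(z)$, and since $\int_\R(1+\lambda^2)^{-1}(1+\lambda^2)^{-k}\,d\rho<\infty$ the function $G$ is an ordinary Herglotz--Nevanlinna function, i.e.\ $G\in N_0$. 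It then remains to check that multiplying a function in $N_0$ by $(1+z^2)^k=\big((z-\I)(z+\I)\big)^k$ yields a function in $N_\kappa^\infty$ with $\kappa\le k$: multiplication by a real polynomial having a single conjugate pair of nonreal zeros raises the generalized Nevanlinna index by at most one (apply this $k$ times), no nonreal poles are created since $G$ is holomorphic off $\R$, and the real poles of $G$ (arising from atoms of $\rho$) stay of positive type after multiplication by the polynomial $(1+z^2)^k$, which is positive on $\R$ --- so the only possible generalized pole of nonpositive type is at $\infty$. This $\Theta$ is the one sought.

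For the ``only if'' direction and the lower bound on $\kappa$, I would argue conversely: if some admissible $\Theta$ gives $M\in N_\kappa^\infty$, then the canonical representation of such functions (again \cite{luger}, \cite[Appendix~B]{kt}) together with the Stieltjes--Liv\v{s}i\'{c} inversion shows that the spectral measure satisfies $\int_\R(1+\lambda^2)^{-\kappa-1}\,d\rho<\infty$; concretely, writing $M$, modulo a real polynomial, as a real polynomial $p$ of degree at most $2\kappa$ that is positive on $\R$ times an ordinary Nevanlinna function $H$, one finds $d\rho=p(\lambda)\,d\mu_H$ with $\mu_H$ the representing measure of $H$, so that $\int_\R(1+\lambda^2)^{-\kappa-1}\,d\rho\le C\int_\R(1+\lambda^2)^{-1}\,d\mu_H<\infty$ for some $C$. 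Since $\rho$ is the same measure for every admissible $\Theta$, this proves that $\kappa\le k$ forces $(1+\lambda^2)^{-k-1}\in L^1(\R,d\rho)$, and, as the set $\{j:(1+\lambda^2)^{-j-1}\in L^1(\R,d\rho)\}$ is upward closed, that the smallest index attainable over all admissible $\Theta$ equals $\min\{j:(1+\lambda^2)^{-j-1}\in L^1(\R,d\rho)\}$. The ``moreover'' statement then follows: if $(1+\lambda^2)^{-k}\notin L^1(\R,d\rho)$ this minimum equals $k$, so the $\Theta$ constructed above (whose index is $\le k$) must have index exactly $k$; and if $k=0$ the constructed Weyl function is $G\in N_0$ itself, whence $\kappa=0$.

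The step I expect to be the main obstacle is the middle one --- the clean verification that $(1+z^2)^k G(z)$ with $G\in N_0$ lies in $N_\kappa^\infty$ with $\kappa\le k$ and with its generalized pole located only at $\infty$ --- since this requires a careful appeal to the generalized Nevanlinna machinery, whereas the two ends of the argument reduce to bookkeeping once the invariance of $\rho$ under changes of $\Theta$ is available.
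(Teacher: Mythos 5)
The paper does not prove this theorem---it is imported verbatim from \cite[Theorem~4.3]{kst2}---so the only comparison available is with the proof given there, and your argument is essentially that proof: the choice $\hat g(z)=(1+z^2)^k$ in Theorem~\ref{IntR}, absorption of the entire function $E$ into $\Theta$, the observation that $\rho$ is independent of the choice of $\Theta$, and the Krein--Langer characterization of $N_\kappa^\infty$ (as in \cite{luger}, \cite[Appendix~B]{kt}) for the converse direction and the sharpness statement. The outline is correct; just note that since $\tilde\Theta=\Theta-f\Phi$ yields $\tilde M=M+f$, you must replace $\Theta$ by $\Theta+E\Phi$ rather than $\Theta-E\Phi$ to cancel $E$, and that the step you flag as the main obstacle (that $(1+z^2)^kG\in N_{\kappa}^\infty$ with $\kappa\le k$ for Herglotz $G$, together with the converse integral representation) is precisely what the cited appendix lemmas supply, so your proof is complete modulo those external results---exactly as the original is.
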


\section{The Double Commutation Method}
\label{sec:double}

As explained in the introduction, we want to investigate the effect of the double commutation method introduced for Dirac operators in \cite{doubledirac} on
the singular Weyl function. Hence we will review some prerequisites on this method first. 

Given a one-dimensional Dirac operator $H$ associated with $\tau$ in $\hil=L^2(I,\C^2)$ we denote by $u_-(z,x)$, $u_+(z,x)$ its corresponding
Weyl solutions, that is, solutions of $\tau u = z u$ which are in the domain of $H$ near $a$, $b$, respectively.
In general, $u_-(z,x)$ ($u_+(z,x)$) might not exist unless $z\in\C\backslash\sig_{ess}(H_{(a,c)}^D)$ ($z\in\C\backslash\sig_{ess}(H_{(c,b)}^D)$)
and by using them we will always implicitly suppose their existence in such a situation.
Without loss of generality we will also assume $u_\pm(z^*,x)=u_\pm(z,x)^*$ such that $u_\pm(\lam,x)$ is real whenever $\lam\in\R$.

Given $u_-(\lam,x)$, $\lam\in\R$, and $\gam\in[-\|u_-(\lam)\|^{-2},\infty]$ let us set\footnote{Here and henceforth we employ the convention $\infty^{-1}=0$.
The case $\gam=0$ has to be read as $H_0 = H$ and this case is of course trivial.}
\begin{align}
u_\gam(\lam,x)& =\frac{u_-(\lam,x)}{c_\gam(\lam,x)}, & 
c_\gam(\lam,x) & =\frac{1}{\gam}+\int_a^x u_-(\lam,y)^\top u_-(\lam,y) dy
\end{align}
and define
\begin{align}\begin{split}\label{defhgam}
H_{\gam} f = \tau_\gam f:=(\tau+Q_\gam)f, \quad \dom(H_{\gam}) = \{f \in 
\hil \,|\, f\in AC_{\loc}(I,\C^2),~ \tau_{\gam} f \in \hil, ~ \\
 \hfill W_a(u_{\gam}(\lam),f)=W_b(u_{\gam}(\lam),f)=0 \}, 
\end{split}\end{align}
\begin{align}
\begin{split}\label{Qgamma}
Q_{\gam}(x) &= \frac{2}{c_\gam(\lam,x)}\re\Big(\frac{1}{\I}\sig_2u_-(\lam,x)u_-(\lam,x)^\top\Big)\\
&= \frac{u_{-,1}(\lam,x)^2
- u_{-,2}(\lam,x)^2}{c_{\gam}(\lam,x)}\sig_1 -2 \frac{u_{-,1}(\lam,x)
u_{-,2}(\lam,x)}{c_{\gam}(\lam,x)}\sig_3.
\end{split}
\end{align}
Then the main result from \cite{doubledirac} states that $H$ and $H_\gam$ are unitarily equivalent up to possibly some one-dimensional subspaces.
More precisely, denote by $P$ and $P_\gam$ the orthogonal projections onto the one-dimensional subspaces of $\hil$ spanned, respectively, by
$u_-$ and $u_\gam$ (set $P$,  $P_\gam = 0$ if $u_-$, $u_\gam\not\in \hil$). Then we have

\begin{theorem}[{\cite{doubledirac}}] \label{corspec}
Let $u_-(\lam,x)$, $\lam \in\R$, and $\gam\in[-\|u_-(\lam)\|^{-2},\infty]$ be given and define $H_\gam$ as in \eqref{defhgam}.
If $u_-(\lam) \in\hil$, then we also require $\lam\in\sig_p(H)$.

Suppose first that $u_-(\lam) \not\in \hil$.
\begin{enumerate}[label=(\roman*), ref=(\roman*), leftmargin=*, widest=iii]
\item If $\gam>0$, then $H$ and $(\id-P_{\gam}(\lam)) H_{\gam}$ are
unitarily equivalent. Moreover, $H_{\gam}$ has the additional eigenvalue $\lam$
with eigenfunction $u_{\gam,-}(\lam)$.
\item If $\gam=\infty$, then $H$ and $H_{\gam}$ are unitarily equivalent.
\end{enumerate}

Suppose that $u_-(\lam) \in \hil$ and  $\lam\in\sig_p(H)$ (i.e., $\lam$ is an eigenvalue of $H$).
\begin{enumerate}[label=(\roman*), ref=(\roman*), leftmargin=*, widest=iii]
\item If $\gam\in(-\|u_-(\lam)\|^{-2},\infty)$, then $H$ and
$H_{\gam}$ are unitarily equivalent.
\item If $\gam=-\|u_-(\lam)\|^{-2}$ or $\infty$, then $(\id-P(\lam)) H$ and
$H_{\gam}$ are unitarily equivalent, that is, the eigenvalue $\lam$ is removed.
\end{enumerate}
\end{theorem}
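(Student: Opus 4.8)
The plan is to prove the theorem by exhibiting an explicit transformation operator $U_\gam$ on $\hil=L^2(I,\C^2)$ that intertwines $H$ and $H_\gam$ up to the one-dimensional subspaces $P\hil$ and $P_\gam\hil$, in complete parallel with the double commutation method for one-dimensional Schr\"odinger operators. As a preliminary one has to know that $H_\gam$ is self-adjoint in the first place; this rests on the fact that, whenever $\tau_\gam$ is in the limit circle case at an endpoint, so is $\tau$, and that the real solution $u_\gam(\lam,\cdot)$ then furnishes an admissible boundary condition there. (A less constructive route would be to compute the spectral measure of $H_\gam$ directly in terms of that of $H$ --- one expects a point mass at $\lam$ to be inserted when $u_-(\lam)\notin\hil$ and $\gam>0$, and the existing point mass to be removed in the borderline cases --- and then to invoke the spectral theorem; but the transformation operator also delivers the Weyl solutions of $H_\gam$, which are needed afterwards.)

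First I would introduce a Volterra-type operator $U_\gam$ whose integral kernel is built explicitly out of $u_-(\lam,\cdot)$ and $c_\gam(\lam,\cdot)$ and establish the key algebraic identity that $U_\gam$ sends every solution $v(z,\cdot)$ of $\tau v=zv$ to a solution of $\tau_\gam u=zu$. Verifying this is a direct, if somewhat lengthy, differentiation: it uses $\partial_x c_\gam(\lam,x)=u_-(\lam,x)^\top u_-(\lam,x)$, the first-order systems satisfied by $v(z,\cdot)$ and by $u_-(\lam,\cdot)$, and the Lagrange identity $\partial_x W_x(u_-(\lam),v(z))=(\lam-z)\,u_-(\lam,x)^\top v(z,x)$, which makes the terms carrying the Volterra integral cancel; what remains after inserting the explicit form \eqref{Qgamma} of $Q_\gam$ is an elementary matrix identity. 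The same type of computation shows, by inspection of \eqref{Qgamma}, that $u_\gam(\lam,\cdot)=u_-(\lam,\cdot)/c_\gam(\lam,\cdot)$ is a solution of $\tau_\gam u=\lam u$, and that $U_\gam$ carries the Weyl solutions $u_\pm(z,\cdot)$ of $H$ to (appropriately normalized) Weyl solutions $u_{\gam,\pm}(z,\cdot)$ of $H_\gam$. Whether $u_\gam(\lam,\cdot)$ actually lies in $\hil$, and near which endpoint it fails to, is precisely what distinguishes the cases in the statement.

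Next I would determine the mapping properties of $U_\gam$ on $\hil$. Using $\partial_x c_\gam(\lam,x)=u_-(\lam,x)^\top u_-(\lam,x)$ once more to telescope the relevant double integrals, an integration by parts produces an identity of the shape $\|U_\gam f\|^2=\|f\|^2-(\text{a boundary contribution at }b)-(\text{a rank-one term})$, from which $U_\gam$ is a partial isometry whose initial and final spaces are the orthogonal complements of the one-dimensional subspaces spanned by $u_-(\lam)$ and by $u_\gam(\lam)$, respectively. The boundary contribution at $b$ vanishes exactly when $c_\gam(\lam,\cdot)$ grows at $b$, which happens for $u_-(\lam)\notin\hil$; in that case $u_\gam(\lam)=u_-(\lam)/c_\gam(\lam)$ turns out to be square integrable near $b$ and enters $\sig_p(H_\gam)$, whereas for $u_-(\lam)\in\hil$ the choice $\gam=-\|u_-(\lam)\|^{-2}$ makes $c_\gam(\lam,\cdot)$ vanish at $b$ (so $u_\gam(\lam)\notin\hil$ and the eigenvalue disappears), and $\gam=\infty$ is analogous with $c_\gam(\lam,\cdot)$ vanishing at $a$ instead. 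Finally I would check that $U_\gam$ maps $\dom(H)$ into $\dom(H_\gam)$ --- restricted to the complementary subspace in the degenerate cases --- with $H_\gam U_\gam=U_\gam H$: that $\tau_\gam(U_\gam f)\in\hil$ follows by extending the intertwining identity from solutions to $AC_{\loc}$ functions via variation of parameters together with the $L^2$-boundedness just obtained, while the self-adjointness conditions $W_a(u_\gam(\lam),U_\gam f)=W_b(u_\gam(\lam),U_\gam f)=0$ follow from Wronskian identities relating $W_x(u_\gam(\lam),U_\gam f)$ to $W_x(u_-(\lam),f)$ and to $W_x(u_\pm(z),f)$ at the respective endpoints --- near $a$ this is immediate because the Volterra term vanishes in the limit, near $b$ it again relies on the decay of $u_\gam(\lam)$.

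With these three properties in place, the two groups of assertions follow by going through the cases in the statement and reading off, in each, whether $U_\gam$ is a genuine isometry of $\hil$ onto $\hil$, onto $(\id-P_\gam)\hil$, or a unitary from $(\id-P)\hil$ onto $\hil$. I expect the main obstacle to be precisely this bookkeeping together with the boundary analysis at $b$: one must pin down exactly when $u_\gam(\lam)\in\hil$, which of the two one-dimensional defect subspaces (if any) is nontrivial, and --- in the borderline cases $\gam=-\|u_-(\lam)\|^{-2}$ and $\gam=\infty$ --- that the boundary term in the isometry identity genuinely does, or does not, contribute. Everything else reduces either to the single ODE computation behind the intertwining identity or to routine Wronskian manipulations.
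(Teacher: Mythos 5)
Your plan reproduces, essentially step for step, the standard double commutation argument that the paper simply imports from \cite{doubledirac}: the proof there constructs exactly this Volterra-type transformation operator built from $u_-(\lam,\cdot)$ and $c_\gam(\lam,\cdot)$, derives the same norm identity by telescoping with $\partial_x c_\gam(\lam,x)=u_-(\lam,x)^\top u_-(\lam,x)$, and obtains the four cases by checking when $u_-(\lam)$ and $u_\gam(\lam)$ are square integrable and whether the boundary term at $b$ survives. The one place to be careful --- which you flag yourself --- is that the vanishing of that boundary term should first be verified for compactly supported $f$ and then extended by density, exactly as in the reference.
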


Furthermore, the solutions of the new operator $H_\gam$ can be expressed in terms of the solutions of $H$.

\begin{lemma}[{\cite[Lemma 3.4]{doubledirac}}]\label{lemmavpm}
Let $u \in AC_{\loc}(I,\C^2)$ fulfill $\tau u = z u$,  $z \in \C \backslash \{ \lam \}$,
and set
\be\label{vzx}
v(z,x) = u(z,x) + \frac{u_\gam(\lam,x)}{z-\lam} W_x(u_-(\lam),u(z)).
\ee
Then $v \in AC_{\loc}(I,\C^2)$ and $v$ fulfills $\tau_{\gam} v = z v$.
We also note that if $\hat{u}$, $\hat{v}$ are constructed analogously, then
\begin{align} \label{wronskis}
\begin{split}
W_x(v(z),\hat{v}(\hat{z})) &= W_x(u(z),\hat{u}(\hat{z}))
- \frac{1}{c_{\gam}(\lam,x)} \times \\ &\quad
\frac{z-\hat{z}}{(z-\lam)(\hat{z}-\lam)} 
W_x(u_-(\lam),u(z)) W_x(u_-(\lam),\hat{u}(\hat{z})).
\end{split}
\end{align}
In addition, the solutions 
\be \label{ugampm}
u_{\pm,\gam}(z,x) = u_\pm(z,x) + \frac{u_\gam(\lam,x)}{z-\lam} 
W_x(u_-(\lam),u_\pm(z)),
\ee
are square integrable  near $a,b$ and satisfy the boundary condition of $H_{\gam}$
at $a,b$, respectively.
\end{lemma}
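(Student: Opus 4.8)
I would prove the three assertions of the lemma in turn. That $v\in AC_{\loc}(I,\C^2)$ is immediate, since for every admissible $\gam$ the function $c_\gam(\lam,\cdot)$ is locally bounded and locally bounded away from zero on $(a,b)$, so that $u_\gam(\lam,\cdot)=u_-(\lam,\cdot)/c_\gam(\lam,\cdot)$ is locally absolutely continuous, as are $u(z,\cdot)$ and $x\mapsto W_x(u_-(\lam),u(z))$. The identity $\tau_\gam v=zv$ I would verify by a direct computation resting on two facts: the Lagrange-type relation
\be
\frac{d}{dx}W_x(u_-(\lam),u(z))=(\lam-z)\,u_-(\lam,x)^\top u(z,x),
\ee
valid because $Q(x)$ is real and symmetric (a linear combination of $\id,\sig_1,\sig_3$), and the elementary identity $\frac{d}{dx}c_\gam(\lam,x)=u_-(\lam,x)^\top u_-(\lam,x)$. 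Differentiating the definition of $v$, substituting these together with $\tau u=zu$ and $\tau u_-(\lam)=\lam u_-(\lam)$, and collecting terms, one finds $(\tau-z)v=-Q_\gam v$ with $Q_\gam$ exactly the matrix in \eqref{Qgamma}; since $\tau_\gam=\tau+Q_\gam$ this is $\tau_\gam v=zv$. (Recording that $v=(\id-(z-\lam)^{-1}u_\gam(\lam,x)\,u_-(\lam,x)^\top\tfrac{1}{\I}\sig_2)\,u$ organizes this bookkeeping but does not change its nature.)

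The Wronskian formula \eqref{wronskis} is pure algebra. Writing $v=u+\alpha\,u_\gam(\lam)$ and $\hat v=\hat u+\hat\alpha\,u_\gam(\lam)$ with $\alpha=W_x(u_-(\lam),u(z))/(z-\lam)$ and $\hat\alpha=W_x(u_-(\lam),\hat u(\hat z))/(\hat z-\lam)$, bilinearity and antisymmetry of the Wronskian together with $W_x(u_\gam(\lam),u_\gam(\lam))=0$ give
\be
W_x(v,\hat v)=W_x(u,\hat u)+\hat\alpha\,W_x(u,u_\gam(\lam))+\alpha\,W_x(u_\gam(\lam),\hat u),
\ee
and substituting $W_x(\,\cdot\,,u_\gam(\lam))=c_\gam(\lam,x)^{-1}W_x(\,\cdot\,,u_-(\lam))$ and combining the resulting two fractions over a common denominator gives exactly \eqref{wronskis}.

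It remains to deal with $u_{\pm,\gam}$. For the boundary conditions, the same manipulation (again using $W_x(u_\gam(\lam),u_\gam(\lam))=0$) shows $W_x(u_\gam(\lam),u_{\pm,\gam}(z))=c_\gam(\lam,x)^{-1}W_x(u_-(\lam),u_\pm(z))$. Since $u_-(\lam)$ and $u_-(z)$ both lie in the domain of $H$ near $a$, one has $W_a(u_-(\lam),u_-(z))=0$, and since $c_\gam(\lam,\cdot)$ stays bounded away from zero near $a$ (for finite $\gam\neq0$), this yields $W_a(u_\gam(\lam),u_{-,\gam}(z))=0$, the boundary condition of $H_\gam$ at $a$; the argument at $b$ is the same. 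For square integrability I would combine \eqref{wronskis} — taken with $\hat u=\overline{u_\pm(z)}$ and $\hat z=z^*$, so that $\hat v=\overline{u_{\pm,\gam}(z)}$ — with the integrated Lagrange identity $\int_{x_0}^{x_1}|w(z,x)|^2\,dx=(2\I\,\im z)^{-1}(W_{x_1}(w(z),\overline{w(z)})-W_{x_0}(w(z),\overline{w(z)}))$, valid for any solution $w$ of $\tau_\gam w=zw$ with $\im z\neq0$, to obtain
\be
\int_{x_0}^{x_1}|u_{\pm,\gam}(z,x)|^2\,dx=\int_{x_0}^{x_1}|u_\pm(z,x)|^2\,dx-\frac{1}{|z-\lam|^2}\left[\frac{|W_x(u_-(\lam),u_\pm(z))|^2}{c_\gam(\lam,x)}\right]_{x_0}^{x_1}.
\ee
Letting $x_0\downarrow a$ for the lower sign, resp.\ $x_1\uparrow b$ for the upper sign, the right-hand side stays bounded: $u_\pm(z)$ is square integrable near the relevant endpoint, the Wronskian $W_x(u_-(\lam),u_\pm(z))$ has a finite limit there (by Cauchy--Schwarz $u_-(\lam)^\top u_\pm(z)$ is integrable near it), and $c_\gam(\lam,\cdot)$ is bounded away from zero there. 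Hence $u_{\pm,\gam}(z)$ is square integrable near $a$, resp.\ near $b$.

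The step I expect to be the main obstacle is the degenerate parameter values at which $c_\gam(\lam,\cdot)$ vanishes at an endpoint, namely $\gam=\infty$ (for which $c_\gam(\lam,x)\to0$ as $x\to a$, so that $u_\gam(\lam)$ — and indeed the potential $Q_\gam$ — becomes singular at $a$) and, when $u_-(\lam)\in\hil$, the value $\gam=-\|u_-(\lam)\|^{-2}$ (for which $c_\gam(\lam,x)\to0$ as $x\to b$). In these cases the boundary terms in the last two displays are of indeterminate $0/0$ form, and the conclusions must be recovered by a finer limiting argument — a sharpened Cauchy--Schwarz or L'Hôpital estimate (in the second case additionally using that then necessarily $\lam\in\sig_p(H)$ and $u_-(\lam)$ is a scalar multiple of $u_+(\lam)$), or by passing to the limit from non-degenerate $\gam$.
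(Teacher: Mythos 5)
The paper offers no proof of this lemma: it is imported verbatim from \cite{doubledirac}, and the argument there follows the same standard route you take (differentiation of \eqref{vzx} via the Lagrange identity $\frac{d}{dx}W_x(u_-(\lam),u(z))=(\lam-z)u_-(\lam,x)^\top u(z,x)$, bilinearity of the Wronskian for \eqref{wronskis}, and the integrated identity $\int|w|^2=(2\I\im z)^{-1}[W(w,\overline{w})]$ combined with \eqref{wronskis} for the $L^2$ statements). Your verifications of $\tau_\gam v=zv$ and of \eqref{wronskis} are correct.

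There is, however, a genuine gap in your treatment of the endpoint $b$. You justify finiteness of $\lim_{x\to b}W_x(u_-(\lam),u_+(z))$ by saying that ``by Cauchy--Schwarz $u_-(\lam)^\top u_+(z)$ is integrable near it,'' and you assert that the boundary-condition argument at $b$ ``is the same'' as at $a$. Both claims require $u_-(\lam)\in L^2$ near $b$, which fails precisely in the principal case $u_-(\lam)\notin\hil$ of Theorem~\ref{corspec}: there $c_\gam(\lam,x)\to\infty$, the Wronskian $W_x(u_-(\lam),u_+(z))$ may be unbounded as $x\to b$, and $W_b(u_-(\lam),u_+(z))=0$ is false in general (at $a$ it holds because $u_-(\lam)$ and $u_-(z)$ satisfy the \emph{same} boundary condition; $u_-(\lam)$ satisfies no condition at $b$). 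The repair is the refined estimate obtained by applying Cauchy--Schwarz to $W_x-W_c=(\lam-z)\int_c^x u_-(\lam)^\top u_+(z)\,dy$ for $c<x<b$:
\[
|W_x(u_-(\lam),u_+(z))|\le |W_c(u_-(\lam),u_+(z))|+|z-\lam|\,\|u_+(z)\|_{L^2(c,b)}\bigl(c_\gam(\lam,x)-c_\gam(\lam,c)\bigr)^{1/2},
\]
which shows that $|W_x|^2/c_\gam(\lam,x)$ stays bounded as $x\to b$ (yielding square integrability of $u_{+,\gam}$ through your integrated identity) and that $W_x(u_\gam(\lam),u_{+,\gam}(z))=W_x/c_\gam(\lam,x)\to 0$ (yielding the boundary condition), the limit being zero because $\|u_+(z)\|_{L^2(c,b)}$ can be made arbitrarily small by choosing $c$ near $b$. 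This is exactly the $c_\gam^{1/2}$-type bound you correctly anticipate for the degenerate values $\gam=\infty$ at $a$ and $\gam=-\|u_-(\lam)\|^{-2}$ at $b$; the point you miss is that it is needed at $b$ for \emph{every} $\gam$ whenever $u_-(\lam)\notin\hil$, not only in the degenerate cases.
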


\begin{remark}\label{remark1}
All the previous considerations still hold if one starts from the right endpoint $b$ instead of the left endpoint $a$.
One just needs to interchange the following roles:
\begin{align}
u_-(\lam,x) & \to u_+(\lam,x), &
c_\gam(x) & \to -\frac{1}{\gam}-\int_x^b u_+(\lam,y)^\top u_+(\lam,y) dy.
\end{align}
\end{remark}

Now we are ready to relate this method to singular Weyl--Titchmarsh--Kodaira theory.

\begin{theorem}\label{thm:dc1}
Let $H_\gam$ be constructed from $u_-(\lam,x)=\Phi(\lam,x)$, $\lam\in\R$, with $\gam\in[-\|\Phi(\lam)\|^{-2},\infty)$ and set $\widetilde{\Phi}_{\gam}(\lam,x)=\Phi(\lam,x)/c_\gam(\lam,x)$.
Moreover, if $\Phi(\lam)\in\hil$ we require $\lam\in\sig_p(H)$. 

The operator $H_\gam$ has a system of real entire solutions 
\begin{align} \begin{split}
\Phi_\gam(z,x) &= \Phi(z,x) -\widetilde{\Phi}_{\gam}(\lam,x) \int_a^x \Phi(\lam,y)^\top \Phi(z,y) dy\\
&= \Phi(z,x) +\frac{1}{z-\lam} \widetilde{\Phi}_{\gam}(\lam,x)W_x(\Phi(\lam), \Phi(z)), \end{split} \\
\Theta_\gam(z,x) &= \Theta(z,x) +\frac{1}{z-\lam} \Big( \widetilde{\Phi}_{\gam}(\lam,x) W_x(\Phi(\lam), \Theta(z)) + \gam \Phi_\gam(z,x) \Big),
\end{align}
with $W(\Theta_\gam(z),\Phi_\gam(z)) =1$. In fact, for $z=\lam$, we have
\begin{align}
\Phi_\gam(\lam,x) &= \gam^{-1} \widetilde{\Phi}_{\gam}(\lam,x),\label{eq:3.09}\\
\Theta_\gam(\lam,x)&=\Theta(\lam,x) +
\widetilde{\Phi}_{\gam}(\lam,x)W_x(\Phi(\lam),\dot{\Theta}(\lam)) + \gam \dot{\Phi}_\gam(\lam,x),
\end{align}
where the dot denotes the derivative with respect to the spectral parameter $z$. In particular, $H_\gam$ satisfies again Hypothesis~\ref{hypo}.

The Weyl solutions of $H_\gam$ are given by
\begin{align}\begin{split}
\Phi_\gam(z,x), \quad
\Psi_\gam(z,x) &= \Psi(z,x) +\frac{1}{z-\lam} \widetilde{\Phi}_{\gam}(\lam,x) W_x(\Phi(\lam), \Psi(z))\\
& = \Theta_\gam(z,x) + M_\gam(z) \Phi_\gam(z,x),
\end{split}\end{align}
where
\begin{align}
M_\gam(z) = M(z) - \frac{\gam}{z-\lam}
\end{align}
is the singular Weyl function of $H_\gam$.
\end{theorem}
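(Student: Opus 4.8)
The plan is to obtain all assertions from Lemma~\ref{lemmavpm}, applied with $u_-(\lam)=\Phi(\lam)$, for which $u_\gam(\lam,x)=\widetilde{\Phi}_{\gam}(\lam,x)=\Phi(\lam,x)/c_\gam(\lam,x)$ by definition; the restrictions on $\gam$ (and on $\lam$ in case $\Phi(\lam)\in\hil$) are exactly what guarantees $c_\gam(\lam,x)\neq0$ for all $x\in I$, so that $\widetilde{\Phi}_{\gam}(\lam,\cdot)$ and hence $\tau_\gam$ are well defined. I would start by taking the first (manifestly real entire) formula for $\Phi_\gam(z,x)$ as the definition. Since $\Phi(\lam,\cdot)$ and $\Phi(z,\cdot)$ both lie in the domain of $H$ near $a$ one has $W_a(\Phi(\lam),\Phi(z))=0$, and the Lagrange identity $\frac{d}{dx}W_x(\Phi(\lam),\Phi(z))=(\lam-z)\Phi(\lam,x)^\top\Phi(z,x)$ then turns it, for $z\neq\lam$, into the second formula, which is \eqref{vzx} with $u=\Phi(z)$; hence $\Phi_\gam(z,\cdot)$ solves $\tau_\gam v=zv$ for $z\neq\lam$ and, by analyticity in $z$, also at $z=\lam$. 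At $z=\lam$ the integral equals $c_\gam(\lam,x)-\gam^{-1}$, so $\widetilde{\Phi}_{\gam}(\lam,x)\,c_\gam(\lam,x)=\Phi(\lam,x)$ gives \eqref{eq:3.09}; this shows $\Phi_\gam(\lam,\cdot)\neq0$, and for $z\neq\lam$ vanishing of the right-hand side of \eqref{vzx} would force $\Phi(z)\propto\Phi(\lam)$, which is impossible, so $\Phi_\gam$ is non-trivial for every $z$. Comparing with \eqref{ugampm} (taken with $u_-=u_+=\Phi$) identifies $\Phi_\gam=u_{-,\gam}$, so Lemma~\ref{lemmavpm} gives that $\Phi_\gam$ is square integrable near $a$ and obeys the boundary condition of $H_\gam$ there; together with Lemma~\ref{tfae} this already yields Hypothesis~\ref{hypo} for $H_\gam$.

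For the singular solution, I would first apply \eqref{vzx} to $u=\Theta(z)$, obtaining the solution $\widehat{\Theta}_{\gam}(z,x)=\Theta(z,x)+(z-\lam)^{-1}\widetilde{\Phi}_{\gam}(\lam,x)\,W_x(\Phi(\lam),\Theta(z))$ of $\tau_\gam v=zv$ ($z\neq\lam$), which has a simple pole at $z=\lam$ because $W_x(\Phi(\lam),\Theta(\lam))=-W(\Theta(\lam),\Phi(\lam))=-1$. I would cancel it by adding $(z-\lam)^{-1}\gam\,\Phi_\gam(z,x)$: by \eqref{eq:3.09} the residue becomes $-\widetilde{\Phi}_{\gam}(\lam,x)+\gam\cdot\gam^{-1}\widetilde{\Phi}_{\gam}(\lam,x)=0$, so the resulting $\Theta_\gam(z,x)$ is real entire, solves $\tau_\gam v=zv$ for all $z$ by continuity, and expanding the removable singularity to first order in $z-\lam$ gives the stated formula for $\Theta_\gam(\lam,\cdot)$. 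For the normalization I would use \eqref{wronskis} with $v,\hat v$ the transforms of $\Theta,\Phi$ and coinciding spectral parameters: the correction term then carries a factor $z-\hat z=0$, so $W_x(\widehat{\Theta}_{\gam}(z),\Phi_\gam(z))=W_x(\Theta(z),\Phi(z))=1$ for $z\neq\lam$, and since adding a multiple of $\Phi_\gam$ does not change the Wronskian with $\Phi_\gam$ while $W_x(\Theta_\gam(z),\Phi_\gam(z))$ is independent of $x$ and entire in $z$, it equals $1$ for all $z$.

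For the Weyl side I would apply \eqref{vzx} to $u=\Psi(z)=\Theta(z)+M(z)\Phi(z)$. Since $W(\Psi(z),u_+(z))=W(\Theta(z),u_+(z))+M(z)\,W(\Phi(z),u_+(z))=0$ by the definition \eqref{Mdef} of $M$, the solution $\Psi(z,\cdot)$ is a scalar multiple of $u_+(z,\cdot)$, so its transform $\Psi_\gam(z,x)=\Psi(z,x)+(z-\lam)^{-1}\widetilde{\Phi}_{\gam}(\lam,x)\,W_x(\Phi(\lam),\Psi(z))$ is the corresponding multiple of $u_{+,\gam}$ from \eqref{ugampm}, hence lies in the domain of $H_\gam$ near $b$. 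Substituting the explicit expressions for $\Phi_\gam$ and $\Theta_\gam$ into $\Theta_\gam(z,x)+(M(z)-\gam(z-\lam)^{-1})\,\Phi_\gam(z,x)$ and collecting the coefficients of $\Phi(z,x)$ and of $\widetilde{\Phi}_{\gam}(\lam,x)\,W_x(\Phi(\lam),\Phi(z))$, one finds this sum equals $\Psi_\gam(z,x)$. As $\{\Phi_\gam,\Theta_\gam\}$ is a normalized real entire system for $H_\gam$ with $\Phi_\gam$ in the domain near $a$, the definitions \eqref{Mdef}--\eqref{weylpsi} applied to $H_\gam$ then identify $M_\gam(z)=M(z)-\gam(z-\lam)^{-1}$ as the singular Weyl function of $H_\gam$.

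The main obstacle I anticipate is the algebraic bookkeeping in the last two steps: making the residue cancellation that defines $\Theta_\gam$ come out exactly — this is where the precise form of $c_\gam$ and identity \eqref{eq:3.09} are essential — and then, after substituting the expressions that are rational in $z$, checking that the $\Phi(z,x)$- and $W_x(\Phi(\lam),\Phi(z))$-components agree on both sides of the identity for $\Psi_\gam$ (both reducing to $M_\gam+\gam(z-\lam)^{-1}=M$). Everything else is a direct application of Lemmas~\ref{lemmavpm} and \ref{tfae}.
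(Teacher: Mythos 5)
Your proposal is correct and follows exactly the route the paper takes (and merely sketches): apply Lemma~\ref{lemmavpm} with $u_-(\lam)=\Phi(\lam)$, use the Lagrange identity and $W_a(\Phi(\lam),\Phi(z))=0$ to pass between the two forms of $\Phi_\gam$, observe that the added multiple $\gam(z-\lam)^{-1}\Phi_\gam$ cancels the residue $-\widetilde{\Phi}_\gam(\lam,x)$ of the transformed $\Theta$, and verify the Wronskian via \eqref{wronskis} and the Weyl function by direct substitution. All the computations you fill in (the value at $z=\lam$ via $\int_a^x|\Phi(\lam)|^2=c_\gam-\gam^{-1}$, the identification with $u_{\pm,\gam}$ from \eqref{ugampm}, and the cancellation of the $\gam(z-\lam)^{-1}\Phi_\gam$ terms in $\Theta_\gam+M_\gam\Phi_\gam$) check out.
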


\begin{proof}
Using Lemma~\ref{lemmavpm} it is straightforward to check that $\Phi_\gam(z,x)$, $\Theta_\gam(z,x)$ is a real entire system of solutions whose Wronskian equals
one. The extra multiple of $\Phi_\gam(z,x)$ has been added to $\Theta_\gam(z,x)$ to
remove the pole at $z=\lam$. The rest is a straightforward calculation.
\end{proof}

Note that in the previous theorem the singularity at the left endpoint is not changed, which is reflected by
the fact that also the asymptotic behavior of the Weyl function is almost unchanged.

In the limiting case $\gam=\infty$ we obtain

\begin{theorem}
Let $H_\infty$ be constructed from $u_-(\lam,x)=\Phi(\lam,x)$, $\lam\in\R$, with $\gam=\infty$ and set $\widetilde{\Phi}_\infty(\lam,x)=\Phi(\lam,x)/c_\infty(\lam,x)$.
Moreover, if $\Phi(\lam)\in\hil$ we require $\lam\in\sig_p(H)$.

The operator $H_\infty$ has a system of real entire solutions
\begin{align} 
\Phi_\infty(z,x) &= \frac{1}{z-\lam} \left(\Phi(z,x) - \widetilde{\Phi}_{\infty}(\lam,x) \int_a^x \Phi(\lam,y)^\top \Phi(z,y) dy\right),\\
\Theta_\infty(z,x) &= (z-\lam) \Theta(z,x) +\widetilde{\Phi}_{\infty}(\lam,x) W_x(\Phi(\lam), \Theta(z)),
\end{align}
with $W(\Theta_\infty(z),\Phi_\infty(z)) =1$. In fact, for $z=\lam$, we have
\begin{align}
\Phi_\infty(\lam,x) &= \dot{\Phi}(\lam,x) - \widetilde{\Phi}_{\infty}(\lam,x) \int_a^x \Phi(\lam,y)^\top \dot{\Phi}(\lam,y) dy,\\
\Theta_\infty(\lam,x)&= -\widetilde{\Phi}_{\infty}(\lam,x).
\end{align}
In particular, $H_\infty$ satisfies again Hypothesis~\ref{hypo}.

The Weyl solutions of $H_\infty$ are given by
\begin{align}\begin{split}
\Phi_\infty(z,x),\quad
\Psi_\infty(z,x) &= (z-\lam)\Psi(z,x) + \widetilde{\Phi}_{\infty}(\lam,x) W_x(\Phi(\lam), \Psi(z))\\
& = \Theta_\infty(z,x) + M_\infty(z) \Phi_\infty(z,x),
\end{split}\end{align}
where
\begin{align}
M_\infty(z) = (z-\lam)^2 M(z)
\end{align}
is the singular Weyl function of $H_\infty$.
\end{theorem}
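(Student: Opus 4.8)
The plan is to follow the proof of Theorem~\ref{thm:dc1} essentially verbatim, the only new point being that for $\gam=\infty$ the pole of the transformed solutions at $z=\lam$ can no longer be removed by adding a suitable multiple of $\Phi_\gam$ to $\Theta_\gam$, and must instead be absorbed by the explicit prefactors $(z-\lam)^{\pm1}$ occurring in the statement. Throughout I take $u_-(\lam,x)=\Phi(\lam,x)$, so that $c_\infty(\lam,x)=\int_a^x\Phi(\lam,y)^\top\Phi(\lam,y)\,dy$ and $u_\infty(\lam,x)=\widetilde{\Phi}_\infty(\lam,x)$, and for every real entire solution $u(z,x)$ of $\tau u=zu$ I abbreviate
\[
v_u(z,x)=u(z,x)+\frac{\widetilde{\Phi}_\infty(\lam,x)}{z-\lam}\,W_x(\Phi(\lam),u(z)),
\]
the solution of $\tau_\infty v=zv$ supplied by Lemma~\ref{lemmavpm}. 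I then set $\Phi_\infty:=(z-\lam)^{-1}v_\Phi$, $\Theta_\infty:=(z-\lam)\,v_\Theta$, and $\Psi_\infty:=(z-\lam)\,v_\Psi$, where $\Psi=\Theta+M(z)\Phi$ is the Weyl solution of $H$ from \eqref{weylpsi}.

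First I would check that $\Phi_\infty$ and $\Theta_\infty$ are real entire. For $\Theta_\infty$ this is immediate, as $(z-\lam)v_\Theta(z,x)=(z-\lam)\Theta(z,x)+\widetilde{\Phi}_\infty(\lam,x)W_x(\Phi(\lam),\Theta(z))$ is manifestly entire. For $\Phi_\infty$ one has to verify that $v_\Phi$ vanishes at $z=\lam$: the Lagrange identity $\frac{d}{dx}W_x(\Phi(\lam),\Phi(z))=(\lam-z)\Phi(\lam,x)^\top\Phi(z,x)$, combined with $W_a(\Phi(\lam),\Phi(z))=0$ (both $\Phi(\lam)$ and $\Phi(z)$ lying in the domain of $H$ near $a$), rewrites $v_\Phi(z,x)$ as $\Phi(z,x)-\widetilde{\Phi}_\infty(\lam,x)\int_a^x\Phi(\lam,y)^\top\Phi(z,y)\,dy$, whence $v_\Phi(\lam,x)=\Phi(\lam,x)-\widetilde{\Phi}_\infty(\lam,x)c_\infty(\lam,x)=0$. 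Thus $\Phi_\infty$ is real entire, and the values at $z=\lam$ recorded in the statement follow from a first-order Taylor expansion, using in addition $W_x(\Phi(\lam),\Theta(\lam))=-1$ for $\Theta_\infty(\lam,x)$.

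Next I would compute the Wronskian: since the prefactors cancel, $W(\Theta_\infty(z),\Phi_\infty(z))=W(v_\Theta(z),v_\Phi(z))$, and evaluating the transformation formula \eqref{wronskis} at coinciding spectral parameters (so that its correction term drops out) gives $W(v_\Theta(z),v_\Phi(z))=W(\Theta(z),\Phi(z))=1$. In particular $\Phi_\infty(z,\cdot)\not\equiv0$ for all $z$, and since $\Phi_\infty(z,\cdot)$ is proportional to the solution $u_{-,\infty}(z,\cdot)$ of \eqref{ugampm} it lies in the domain of $H_\infty$ near $a$, so Lemma~\ref{tfae} shows that $H_\infty$ again satisfies Hypothesis~\ref{hypo}. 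For the Weyl solution near $b$, the last assertion of Lemma~\ref{lemmavpm} with $u_+=\Psi$ shows that $v_\Psi$ — hence also $\Psi_\infty=(z-\lam)v_\Psi$ — is square integrable near $b$ and satisfies the boundary condition of $H_\infty$ there. Expanding $v_\Psi=v_\Theta+M(z)v_\Phi$ and using $(z-\lam)v_\Phi=(z-\lam)^2\Phi_\infty$ then yields $\Psi_\infty=\Theta_\infty+(z-\lam)^2M(z)\,\Phi_\infty$, so comparison with \eqref{weylpsi} for $H_\infty$ identifies $M_\infty(z)=(z-\lam)^2M(z)$.

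I expect the one genuinely non-routine step to be the vanishing of $v_\Phi$ at $z=\lam$, which is precisely what turns $\Phi_\infty=(z-\lam)^{-1}v_\Phi$ into an entire (rather than merely meromorphic) solution; the remainder is bookkeeping already present in Lemma~\ref{lemmavpm} and the proof of Theorem~\ref{thm:dc1}, together with the linearity of the Wronskian in each argument.
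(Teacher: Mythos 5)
Your proposal is correct and follows essentially the same route as the paper: the paper's own (very terse) proof simply observes that the $\gamma\to\infty$ limit of the formulas from Theorem~\ref{thm:dc1} produces a solution vanishing at $z=\lambda$ whose zero must be divided out, which is exactly your construction $\Phi_\infty=(z-\lambda)^{-1}v_\Phi$, with the remaining claims following from Lemma~\ref{lemmavpm} as in Theorem~\ref{thm:dc1}. Your identification of the vanishing of $v_\Phi(\lambda,\cdot)$ (via the Lagrange identity and $W_a(\Phi(\lambda),\Phi(z))=0$) as the key step, and the bookkeeping for the Wronskian, the values at $z=\lambda$, and $M_\infty(z)=(z-\lambda)^2M(z)$, all match the intended argument.
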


\begin{proof}
In the limiting case $\gam \to \infty$ the definition from the previous theorem would give
$\Phi_\infty(\lam,x)=0$ (see \eqref{eq:3.09}) and we simply need to remove this zero. The rest follows as before.
\end{proof}

Note that in this case the singularity at the left endpoint is changed, however, the growth of $M(z)$ is increased whereas
it would be desirable to have a transformation which decreases the growth. Hence we need to invert
the above procedure. To this end note that the new operator $H_\infty$ has $\Theta_\infty(\lam,x)=u_{\infty,+}(\lam,x)$.
So this shows that we should look at the case where $H_\gam$ is computed from $u_+(\lam,x)=\Theta(\lam,x)$
(cf.\ Remark~\ref{remark1}). 
Let us also stress that the following result is essential for the application to the perturbed radial Dirac operator in the next section.

\begin{theorem}\label{Thetalambda}
Let $H_\gam$ be constructed from $u_+(\lam,x)=\Theta(\lam,x)\not\in\hil$, $\lam\in\R$, with $\gam\in(0,\infty]$ and set 
\begin{align}
\widetilde{\Theta}_{\gam}(\lam,x) & =\frac{\Theta(\lam,x)}{c_\gam(\lam,x)}, & 
c_\gam(\lam,x) & = -\frac{1}{\gam}-\int_x^b \Theta(\lam,y)^\top \Theta(\lam,y)dy.
\end{align}

The operator $H_\gam$ has a  system of real entire solutions
\begin{align}\label{Thetalambda:phigam}
\Phi_\gam(z,x) &= (z-\lam)\Big( \Phi(z,x) +\frac{1}{z-\lam}\widetilde{\Theta}_\gam(\lam,x) W_x(\Theta(\lam), \Phi(z))\Big),\\
\begin{split} \Theta_\gam(z,x) &= \frac{1}{z-\lam}\Big[\Theta(z,x) +\frac{1}{z-\lam}  \widetilde{\Theta}_\gam(\lam,x) W_x(\Theta(\lam), \Theta(z)) \\&\qquad\qquad + \Big(\frac{1}{\gam}-W_b(\Theta(\lam),\dot{\Theta}(\lam))\Big) \Phi_\gam(z,x) \Big],\end{split}
\end{align}
with $W(\Theta_\gam(z),\Phi_\gam(z)) =1$. Moreover,
\begin{align}\label{eq:3.23}
\Phi_\gam(\lam,x) &=\widetilde{\Theta}_\gam(\lam,x).
\end{align}
In particular, $H_\gam$ satisfies again Hypothesis~\ref{hypo}.

The Weyl solutions of $H_\gam$ are given by
\begin{align}\begin{split}
\Phi_\gam(z,x),\quad
\Psi_\gam(z,x) &= \frac{1}{z-\lam} \Big[ \Psi(z,x) +\frac{1}{z-\lam} \widetilde{\Theta}_{\gam}(\lam,x) W_x(\Theta(\lam), \Psi(z))\Big]\\
& = \Theta_\gam(z,x) + M_\gam(z) \Phi_\gam(z,x),
\end{split}\end{align}
where
\begin{align}\label{weylgamma2}
M_\gam(z)=\frac{M(z)+ W_b(\Theta(\lam),\dot{\Theta}(\lam)) (z-\lam)}{(z-\lam)^2}
-\frac{\gam^{-1}}{z-\lam}
\end{align}
is the singular Weyl function of $H_\gam$.
\end{theorem}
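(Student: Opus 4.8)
The plan is to deduce everything from Lemma~\ref{lemmavpm} applied at the right endpoint $b$, in the sense of Remark~\ref{remark1}, starting from $u_+(\lam,x)=\Theta(\lam,x)$; then the commutation coefficient is $u_\gam(\lam,x)=\Theta(\lam,x)/c_\gam(\lam,x)=\widetilde\Theta_\gam(\lam,x)$, with $c_\gam$ the quantity in the statement. First I would feed the two real entire solutions $\Phi(z,x)$ and $\Theta(z,x)$ into the transformation \eqref{vzx}; this produces solutions $v_\Phi(z,x)=\Phi(z,x)+(z-\lam)^{-1}\widetilde\Theta_\gam(\lam,x)W_x(\Theta(\lam),\Phi(z))$ and $v_\Theta(z,x)=\Theta(z,x)+(z-\lam)^{-1}\widetilde\Theta_\gam(\lam,x)W_x(\Theta(\lam),\Theta(z))$ of $\tau_\gam u = zu$ that depend meromorphically on $z$ with at worst a simple pole at $z=\lam$. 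Multiplying $v_\Phi$ by $z-\lam$ removes the pole and yields precisely $\Phi_\gam(z,x)$ as in \eqref{Thetalambda:phigam}; since $W_x(\Theta(\lam),\Phi(\lam))=W(\Theta(\lam),\Phi(\lam))=1$ is independent of $x$, letting $z\to\lam$ gives $\Phi_\gam(\lam,x)=\widetilde\Theta_\gam(\lam,x)$, i.e.\ \eqref{eq:3.23}, and $\Phi_\gam(z,\cdot)$ (being $z-\lam$ times the solution $u_{-,\gam}$ near $a$ from \eqref{ugampm}, built from $u_-=\Phi$) is square integrable near $a$ and satisfies the boundary condition of $H_\gam$ there.

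The construction of $\Theta_\gam(z,x)$ is the step I expect to be the main obstacle. I would set $\Theta_\gam(z,x)=(z-\lam)^{-1}\bigl(v_\Theta(z,x)+\beta(z)\Phi_\gam(z,x)\bigr)$ with $\beta$ an entire function to be chosen so that the quotient is again entire, that is, so that the bracket vanishes at $z=\lam$. To evaluate it there, recall that $\frac{d}{dx}W_x(u(z),u(\hat z))=(z-\hat z)\,u(z,x)^\top u(\hat z,x)$ for solutions of $\tau u = zu$ and $\tau u = \hat z u$; anchoring at $b$ (where the limit exists because $\Theta(\lam)$ is square integrable near $b$) gives $W_x(\Theta(\lam),\dot\Theta(\lam))=W_b(\Theta(\lam),\dot\Theta(\lam))+\int_x^b\Theta(\lam,y)^\top\Theta(\lam,y)\,dy=W_b(\Theta(\lam),\dot\Theta(\lam))-\gam^{-1}-c_\gam(\lam,x)$. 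Combining this with $\lim_{z\to\lam}(z-\lam)^{-1}W_x(\Theta(\lam),\Theta(z))=W_x(\Theta(\lam),\dot\Theta(\lam))$ and $\Theta(\lam,x)=c_\gam(\lam,x)\,\widetilde\Theta_\gam(\lam,x)$, the value of the bracket at $z=\lam$ collapses to $\widetilde\Theta_\gam(\lam,x)\,(W_b(\Theta(\lam),\dot\Theta(\lam))-\gam^{-1}+\beta(\lam))$; all $x$-dependence has cancelled, forcing $\beta(z)\equiv\gam^{-1}-W_b(\Theta(\lam),\dot\Theta(\lam))$, which is exactly the constant in the statement, so $\Theta_\gam$ is real entire. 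For the normalization I would invoke \eqref{wronskis} with $\hat z=z$: the correction term carries the factor $z-\hat z$ and vanishes, so $W_x(v_\Theta(z),v_\Phi(z))=W_x(\Theta(z),\Phi(z))=1$; since $\Phi_\gam=(z-\lam)v_\Phi$ and $v_\Theta=(z-\lam)\Theta_\gam-\beta\Phi_\gam$, bilinearity of the Wronskian together with $W_x(\Phi_\gam,\Phi_\gam)=0$ gives $W(\Theta_\gam(z),\Phi_\gam(z))=1$. Finally, as $\Phi_\gam(z,\cdot)$ is a non-trivial real entire solution lying in $\dom(H_\gam)$ near $a$ for every $z\in\C$ (with $\Phi_\gam(\lam,\cdot)=\widetilde\Theta_\gam(\lam,\cdot)$ the eigenfunction of $H_\gam$ supplied by Theorem~\ref{corspec}, resp.\ obtained by continuity in $z$ when $\gam=\infty$), Lemma~\ref{tfae} yields Hypothesis~\ref{hypo} for $H_\gam$.

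It remains to identify the Weyl solution near $b$ and the Weyl function. By \eqref{ugampm} (built from $u_+=\Psi$), $u_{+,\gam}(z,x)=\Psi(z,x)+(z-\lam)^{-1}\widetilde\Theta_\gam(\lam,x)W_x(\Theta(\lam),\Psi(z))$ is square integrable near $b$ and satisfies the boundary condition of $H_\gam$ there, so $\Psi_\gam(z,x):=(z-\lam)^{-1}u_{+,\gam}(z,x)$ is a nonzero multiple of the Weyl solution of $H_\gam$ near $b$. Substituting $\Psi=\Theta+M\Phi$ into $u_{+,\gam}$ and regrouping, using $v_\Phi=(z-\lam)^{-1}\Phi_\gam$ and $v_\Theta=(z-\lam)\Theta_\gam-\beta\Phi_\gam$, rewrites $\Psi_\gam$ as $\Theta_\gam+M_\gam\Phi_\gam$ with $M_\gam(z)=(z-\lam)^{-2}M(z)-(z-\lam)^{-1}(\gam^{-1}-W_b(\Theta(\lam),\dot\Theta(\lam)))$, which is \eqref{weylgamma2}. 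Since the system $(\Phi_\gam,\Theta_\gam)$ has been fixed, this identifies $M_\gam$ as the singular Weyl function of $H_\gam$; the remaining routine points (reality, analyticity off $\sig(H_\gam)$, and the fact that the apparent double pole of $M_\gam$ at $\lam$ is at most simple, since $u_+(\lam)=\Theta(\lam)$ forces $M(\lam)=0$) fall out along the way.
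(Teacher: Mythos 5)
Your proposal is correct and follows essentially the same route as the paper's proof: you remove the pole of $\Theta_\gam$ at $z=\lam$ using the identity $W_x(\Theta(\lam),\dot\Theta(\lam))=W_b(\Theta(\lam),\dot\Theta(\lam))+\int_x^b\Theta(\lam,y)^\top\Theta(\lam,y)\,dy$ obtained from the differentiated Lagrange identity, normalize the Wronskian via \eqref{wronskis}, and read off $M_\gam$ by substituting $\Psi=\Theta+M\Phi$ into the transformed Weyl solution. The only difference is cosmetic (Taylor expansion of the Wronskian in place of the paper's explicit l'H\^{o}pital step), and you actually spell out the ``straightforward calculation'' for \eqref{weylgamma2} that the paper omits.
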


\begin{proof}
That $\Phi_\gam$ is entire is obvious. For $\Theta_\gam$ use l'H\^{o}pital's rule,
\begin{align*}
\lim_{z\to\lam}\Big(\Theta(z,x) +&\frac{1}{z-\lam}  \widetilde{\Theta}_\gam(\lam,x) W_x(\Theta(\lam), \Theta(z))\Big)\\&\qquad=\Theta(\lam,x)+\widetilde{\Theta}_\gam(\lam,x)W_x(\Theta(\lam), \dot{\Theta}(\lam))\\
&\qquad= -\Big(\frac{1}{\gam}-W_b(\Theta(\lam),\dot{\Theta}(\lam))\Big)\widetilde{\Theta}_\gam(\lam,x)
\end{align*}
since
\[
W_x(\Theta(\lam), \dot{\Theta}(\lam))=W_b(\Theta(\lam), \dot{\Theta}(\lam)) + \int_x^b \Theta(\lam,y)^\top\Theta(\lam,y) dy,
\]
which is obtained by differentiating the Lagrange identity
\[
(\lam-z)\int_x^b\Theta(\lam,y)^\top\Theta(z,y)\,dy=W_b(\Theta(\lam),\Theta(z))-W_x(\Theta(\lam),\Theta(z))
\]
with respect to $z$ and evaluating at $z=\lam$. Hence the pole of $\Theta_\gam(z)$ at $z=\lam$ is removed and the solution is entire. The claim about the Wronskian follows from \eqref{wronskis}. The rest follows by a straightforward calculation as before.
\end{proof}

Note that by \eqref{Mdef} we have $M(\lam)=0$ in the above situation. Moreover, the first summand in \eqref{weylgamma2} has no residue at
$z=\lam$ since the residue of $M_\gam(z)$ must be given by $-\|\Phi_\gam(\lam)\|^{-2}=-\|\widetilde{\Theta}_\gam(\lam)\|^{-2}=-\gam^{-1}$.
Furthermore, if $H$ is limit circle at $b$ and $\gam<\infty$ then $H_\gam$ will
be again limit circle at $b$ by \cite[Theorem~3.7]{doubledirac} (clearly $H_\infty$ is always limit point at $b$).
In the limit circle case, the boundary condition of $H_\gam$ will be generated by $\Phi_\gam(\lam,x) =\widetilde{\Theta}_\gam(\lam,x)\in\hil$
and hence we can repeat this procedure at every zero of
\begin{align}\begin{split}
z\mapsto W_b(\widetilde{\Theta}_\gam(\lam), \Theta_\gam(z)) = & \frac{1}{c_\gam(\lam,b)} \Big[ \frac{1}{\lam-z} W_b(\Theta(\lam),\Theta(z))\\
&{} -\Big(\frac{1}{\gam}-W_b(\Theta(\lam),\dot{\Theta}(\lam))\Big) W_b(\Theta(\lam),\Phi(z))\Big].
\end{split}\end{align}
Since we have $u_{\gam,+}(z,b)= C(z) \widetilde{\Theta}_\gam(\lam,b)$ with a nonzero entire function $C(z)$ this implies
$W(u_{\gam,+}(z), \Theta_\gam(z)) = C(z) W_b(\widetilde{\Theta}_\gam(\lam), \Theta_\gam(z))$. Now
equation \eqref{Mdef} implies that the zeros of this Wronskian coincide with the zeros of $M_\gam(z)$. But the residues of $M_\gam(z)$ are always negative and
hence there must be an odd number of zeros between two consecutive poles of $M_\gam(z)$. In particular, we see that the above Wronskian
has an infinite number of zeros and we can iterate this procedure which will be important later on. We also mention that if the function $E_\gam(z)$ in the
representation \eqref{Mir} is zero, then the derivative at every zero of $M_\gam(z)$ is positive and there will be precisely one zero between each pole.

Finally, one could also consider the case $u_+(\lam,x)=\Phi(\lam,x)$. In this case $\lam$ is an eigenvalue and the procedure coincides with the one
from Theorem~\ref{thm:dc1} if one makes the replacement $\gam^{-1} \to \gam^{-1}+\|\Phi(\lam)\|^2$.

\section{Applications to Radial Dirac Operators}
\label{sec:app}

In this section we are going to apply the double commutation method to perturbed radial Dirac operators
\be\label{begindirac}
H=\frac{1}{\I}\sig_2\frac{d}{dx}+\frac{\kappa}{x}\sig_1+Q(x),
\quad
\begin{cases}
Q(x) \in L^1_{\loc}[0,b), & |\kappa| \ne \frac{1}{2},\\
(1+|\log(x)|) Q(x) \in L^1_{\loc}[0,b), & |\kappa|=\frac{1}{2}.
\end{cases}
\ee
 In the case $Q\equiv 0$ the underlying differential equation can be solved in terms of Bessel functions and
in the general case standard perturbation arguments can be used to show the following:

\begin{lemma}[{\cite[Sect.~8]{singdirac}}]\label{lemPRDPhi}
If $\kappa\ge 0$, then the operator \eqref{begindirac} has a unique real entire solution satisfying
\be\label{eq:estphi}
\Phi(z,x)=x^\kappa \begin{pmatrix}0\\ \frac{\sqrt{\pi}}{2^\kappa\Gamma(\kappa+1/2)}\end{pmatrix} +o(x^\kappa)
\ee
as $x\to 0$.

In addition, this solution satisfies the growth restriction
\be\label{asymPhiprd}
|\Phi(z,x)| = \OO\bigl(|z|^{-\kappa}\E^{|\im(z)| x}\bigr)
\ee
as $|z|\to\infty$ for all $x$ and has the asymptotics 
\be\label{asymPhiprd2}
\Phi(z,x) \sim  
(\pm z)^{-\kappa}\begin{pmatrix}\sin\bigl(z x \mp \frac{\kappa \pi}{2} - \int_0^x q_{\rm el}(y) dy\bigr)\\ \cos\bigl(z x \mp \frac{\kappa \pi}{2}- \int_0^x q_{\rm el}(y) dy\bigr)\end{pmatrix}
\ee
as $|z|\to\infty$ in any sector $|\arg(\pm z)| < \pi - \delta$.
\end{lemma}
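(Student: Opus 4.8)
The plan is to build $\Phi(z,x)$ perturbatively around the explicitly solvable case $Q\equiv0$ and then carry over the three asserted properties one at a time. For $Q\equiv0$ the system $\tau u=zu$ reads $u_1'=-\tfrac\kappa x u_1+zu_2$, $u_2'=-zu_1+\tfrac\kappa x u_2$, and eliminating one component yields $-u_1''+\tfrac{\kappa(\kappa+1)}{x^2}u_1=z^2u_1$ and $-u_2''+\tfrac{\kappa(\kappa-1)}{x^2}u_2=z^2u_2$, i.e.\ Bessel's equation of index $\kappa+\tfrac12$, resp.\ $\kappa-\tfrac12$. So I would take the solution regular at $0$, namely $\Phi_0(z,x)$ with components proportional to $z^{1/2-\kappa}\sqrt{x}\,J_{\kappa+1/2}(zx)$ and $z^{1/2-\kappa}\sqrt{x}\,J_{\kappa-1/2}(zx)$, normalised as in \eqref{eq:estphi}. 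From the power series of $J_\nu(w)/w^\nu$, $\Phi_0$ is real entire in $z$ and satisfies \eqref{eq:estphi} (with $q_{\rm el}=0$); from $|J_\nu(w)|=\OO(|w|^{-1/2}\E^{|\im w|})$ one reads off \eqref{asymPhiprd}; and the classical sectorial asymptotics $J_\nu(w)=\sqrt{2/(\pi w)}\,(\cos(w-\tfrac{\nu\pi}2-\tfrac\pi4)+\OO(w^{-1}))$, valid for $|\arg w|<\pi-\delta$, give \eqref{asymPhiprd2} with $q_{\rm el}=0$.

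Next I would set up a Volterra integral equation. Writing $\tau u=zu$ as $u'=\I\sigma_2(z-\tfrac\kappa x\sigma_1)u-\I\sigma_2 Q(x)u$ and choosing a fundamental matrix $\Pi(z,\cdot)$ of the free system with $\det\Pi\equiv1$, variation of parameters gives
\be
\Phi(z,x)=\Phi_0(z,x)-\int_0^x\Pi(z,x)\Pi(z,y)^{-1}\,\I\sigma_2\,Q(y)\,\Phi(z,y)\,dy .
\ee
The crucial input is the behaviour of the kernel for $0<y\le x$: since the two free solutions behave near $0$ like $x^{\kappa}$ and $x^{-\kappa}$ — with an additional $\log$ factor precisely when $|\kappa|=\tfrac12$ — one has $|\Pi(z,x)\Pi(z,y)^{-1}|\lesssim(x/y)^{\kappa}$ (times a $\log$ when $|\kappa|=\tfrac12$), which is exactly integrable against $Q$ under the stated hypotheses ($Q\in L^1_{\loc}$, resp.\ $(1+|\log x|)Q\in L^1_{\loc}$ when $|\kappa|=\tfrac12$). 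Hence the Neumann series converges locally uniformly in $(z,x)$ and yields a real entire solution $\Phi$ with $\Phi-\Phi_0=\oo(x^{\kappa})$, i.e.\ \eqref{eq:estphi}. Uniqueness follows because any real entire solution with that leading term differs from $\Phi$ by a solution that is $\oo(x^{\kappa})$, whereas the second linearly independent solution near $0$ grows like $x^{-\kappa}$ (times a $\log$ when $|\kappa|=\tfrac12$), so the difference must vanish identically.

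For the large-$z$ statements I would iterate the Volterra equation with the weight $|z|^{-\kappa}\E^{|\im z|x}$ of $\Phi_0$ built in; a Gronwall-type estimate then gives $|\Phi(z,x)-\Phi_0(z,x)|\le C|z|^{-\kappa}\E^{|\im z|x}\int_0^x|Q(y)|\,dy$ and hence \eqref{asymPhiprd}. To obtain the sharper asymptotics \eqref{asymPhiprd2} I would first pass to the combinations $\Phi_1\pm\I\Phi_2$, i.e.\ the substitution that diagonalises the $z$-term; under it the $q_{\rm el}\id$-part of $Q$ becomes a multiple of $\sigma_3$ that can be integrated out exactly, producing the phase $\E^{\mp\I\int_0^x q_{\rm el}}$, while the $q_{\rm am}\sigma_1$- and $(m+q_{\rm sc})\sigma_3$-parts turn off-diagonal in these variables and are $\OO(z^{-1})$ after one integration by parts against the oscillatory factor $\E^{\pm\I zx}$. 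Comparing with the free asymptotics from the first step then gives \eqref{asymPhiprd2} in any sector $|\arg(\pm z)|<\pi-\delta$.

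The hard part will be running the Volterra iteration so that it is controlled \emph{simultaneously} as $x\to0$ — the singular endpoint forces the sharp $(x/y)^\kappa$ kernel bound and a separate treatment of the resonant index $|\kappa|=\tfrac12$, which is exactly where the $\log$-weighted hypothesis enters — and as $|z|\to\infty$ uniformly on sectors, together with extracting the precise oscillatory phase and seeing that only $q_{\rm el}$ survives in it while the $\sigma_1$- and $\sigma_3$-parts of $Q$ wash out. The $z$-diagonalising change of variables combined with a non-stationary-phase integration by parts is the technical core of the argument.
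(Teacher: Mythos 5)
Your proposal is correct and follows essentially the same route as the paper, which for this lemma only cites \cite[Sect.~8]{singdirac} after remarking that for $Q\equiv 0$ the equation is solved by Bessel functions and that the general case follows by standard perturbation arguments --- precisely the Bessel-plus-Volterra scheme you carry out, including the $(x/y)^{\kappa}$ kernel bound with the logarithmic correction at $|\kappa|=\tfrac{1}{2}$ and the diagonalization that isolates the $q_{\rm el}$ phase. No substantive gaps: your free solution $\bigl(z^{1/2-\kappa}\sqrt{x}\,J_{\kappa+1/2}(zx),\, z^{1/2-\kappa}\sqrt{x}\,J_{\kappa-1/2}(zx)\bigr)$ does solve the first-order system componentwise (up to the overall constant $\sqrt{\pi/2}$ needed to match \eqref{eq:estphi}), and the uniqueness and large-$z$ arguments are the standard ones.
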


\begin{lemma}[{\cite[App.~A]{ahm2}}]\label{lem:sol}
If $\kappa>\frac{1}{2}$, then \eqref{begindirac} has a second real entire solution $\Theta(z,x)$ with $W(\Theta(z),\Phi(z))=1$
satisfying
\be
\Theta(z,x)=\begin{pmatrix} x^{-\kappa}\widetilde{\Theta}_1(z,x)\\x^{-\kappa+1}\widetilde{\Theta}_2(z,x)\end{pmatrix},
\ee
with $\widetilde{\Theta}_1(z,.)\in C[0,b)$ and $\widetilde{\Theta}_2(z,.)\in L_{\loc}^1[0,b)$.

Moreover, 
\be
\int_x^c|\Theta(z,y)|^2\,dy=x^{-2\kappa+1}\widetilde{w}(z,x),\quad \widetilde{w}(z,.)\in W^{1,1}(0,c),\quad \widetilde{w}(z,0)>0,
\ee
for every $c\in(0,b)$.
\end{lemma}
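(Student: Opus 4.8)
The plan is to reduce both assertions to the behavior of solutions near the regular singular endpoint $x=0$, and I would begin with existence of a suitable $\Theta$. For the radial operator \eqref{begindirac} the operator $H^D_{(0,c)}$ has purely discrete spectrum --- this is immediate from Lemma~\ref{lemPRDPhi}, which furnishes the real entire solution $\Phi(z,x)$ lying in $\dom(H)$ near $0$, so that Lemma~\ref{tfae}\,(ii) applies --- hence Hypothesis~\ref{hypo} holds and Lemma~\ref{tfae}\,(iii) supplies a real entire solution $\Theta(z,x)$ with $W(\Theta(z),\Phi(z))=1$. Such a $\Theta$ is determined only up to adding $f(z)\Phi(z,x)$ with $f$ real entire; since \eqref{eq:estphi} gives $\Phi_1(z,x)=\OO(x^{\kappa+1})$ and $\Phi_2(z,x)\sim\frac{\sqrt{\pi}}{2^\kappa\Gamma(\kappa+1/2)}\,x^\kappa$, while $\kappa>\tfrac12$ forces $x^\kappa=\oo(x^{-\kappa+1})$ and $x^{\kappa+1}=\oo(x^{-\kappa})$ as $x\to0$, such an addition changes neither the asserted factored form of $\Theta$ nor the regularity classes of the factors. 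It therefore suffices to establish the local behavior of one convenient choice of $\Theta$.

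For the local form of $\Theta$ I would argue first for $Q\equiv0$: then $\tau u=zu$ is solved, after the trivial gauge transformation removing $q_{\rm el}$, by $\sqrt{zx}$ times Bessel functions of orders $\kappa\mp\tfrac12$, the solution $\Phi$ corresponding to $J_{\kappa\mp1/2}$ and a linearly independent one to $J_{-(\kappa\mp1/2)}$ --- to Bessel functions of the second kind, which contribute only continuous logarithmic terms in the subdominant orders, when $\kappa-\tfrac12\in\N$ --- and the power series of the Bessel functions exhibit the latter in exactly the form $\Theta(z,x)=(x^{-\kappa}\widetilde{\Theta}_1(z,x),\,x^{-\kappa+1}\widetilde{\Theta}_2(z,x))^\top$ with $\widetilde{\Theta}_j(z,\cdot)$ real analytic up to $0$; here $\kappa>\tfrac12$ is precisely what excludes the resonance between the exponents $1-\kappa$ and $\kappa$ that would otherwise produce a genuine logarithm already in the leading order. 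For general $Q$ one substitutes this ansatz into the equation, obtaining a coupled system for $\widetilde{\Theta}_1$ and $x\widetilde{\Theta}_2$; integrating the two resulting equations with the appropriate integrating factors --- the $\Theta_1$-equation from $0$, where the forcing has the integrable singularity $\OO(y)$, and the $\Theta_2$-equation from $c$, which automatically reproduces the $x^{-\kappa+1}$ leading behavior --- converts the problem into a coupled system of Volterra integral equations whose Neumann iteration converges locally uniformly in $(z,x)$ precisely because $Q\in L^1_{\loc}[0,b)$ (first on a small interval $[0,\delta]$, then extended to $[0,c]$ by ordinary ODE theory). This yields a real entire (in $z$) solution of $\tau u=zu$ with $\widetilde{\Theta}_1(z,\cdot)\in C[0,b)$ and $\widetilde{\Theta}_2(z,\cdot)\in L^1_{\loc}[0,b)$, where $\widetilde{\Theta}_2$ may genuinely fail to be bounded near $0$ when $Q$ is not; adding a real entire multiple of $\Phi$ finally achieves $W(\Theta,\Phi)=1$.

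For the integral of $|\Theta|^2$ note that the factored form gives $|\Theta(z,y)|^2=y^{-2\kappa}\bigl(|\widetilde{\Theta}_1(z,y)|^2+y^2\,|\widetilde{\Theta}_2(z,y)|^2\bigr)$, and since $\kappa>\tfrac12$ the first summand dominates near $0$, so $\int_x^c|\Theta(z,y)|^2\,dy=x^{-2\kappa+1}\widetilde{w}(z,x)$ with $\widetilde{w}(z,0)=\frac{|\widetilde{\Theta}_1(z,0)|^2}{2\kappa-1}$. Letting $x\to0$ in $W(\Theta(z),\Phi(z))=\Theta_1\Phi_2-\Theta_2\Phi_1=1$ and using \eqref{eq:estphi} pins down $\widetilde{\Theta}_1(z,0)=\frac{2^\kappa\Gamma(\kappa+1/2)}{\sqrt{\pi}}\neq0$ (in particular independent of $z$), whence $\widetilde{w}(z,0)>0$. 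To obtain $\widetilde{w}(z,\cdot)\in W^{1,1}(0,c)$, one writes $\widetilde{w}(z,x)=x^{2\kappa-1}\int_x^c|\Theta(z,y)|^2\,dy$ and, after one integration by parts in the $|\widetilde{\Theta}_1|^2$-part, obtains
\begin{align*}
\widetilde{w}\,'(z,x) &= x^{2\kappa-2}\int_x^c y^{1-2\kappa}\,\partial_y|\widetilde{\Theta}_1(z,y)|^2\,dy + (2\kappa-1)\,x^{2\kappa-2}\int_x^c y^{2-2\kappa}|\widetilde{\Theta}_2(z,y)|^2\,dy\\
&\qquad {}- x\,|\widetilde{\Theta}_2(z,x)|^2 - c^{\,1-2\kappa}\,|\widetilde{\Theta}_1(z,c)|^2\,x^{2\kappa-2}.
\end{align*}
The last term lies in $L^1(0,c)$ precisely because $\kappa>\tfrac12$; by Fubini the first three are integrable near $0$ once one knows $\int_0^c|\partial_y|\widetilde{\Theta}_1(z,y)|^2|\,dy<\infty$ and $\int_0^c y\,|\widetilde{\Theta}_2(z,y)|^2\,dy<\infty$. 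Both follow from the construction: writing $\widetilde{\Theta}_2(z,x)$ as a term bounded near $0$ plus $-x^{2\kappa-1}\int_x^c y^{-2\kappa}g(z,y)\,dy$ with $g(z,\cdot)\in L^1(0,c)$ (contributed by the potential through the Volterra equation) and using $\widetilde{\Theta}_1'=-q_{\rm am}\widetilde{\Theta}_1+x(z+m+q_{\rm sc}-q_{\rm el})\widetilde{\Theta}_2$, one reduces these two integrals --- by Fubini and the elementary inequality $\min(s,t)^{4\kappa}\le s^{2\kappa}t^{2\kappa}$ --- to finite quantities built from $z$, $\sup_{[0,c]}|\widetilde{\Theta}_1(z,\cdot)|$, and the $L^1(0,c)$-norms of $m$ and of the entries of $Q$.

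The step I expect to be the main obstacle is exactly this near-$0$ analysis under the weak assumption $Q\in L^1_{\loc}$: establishing the factored form with precisely the stated (and limited) regularity uniformly in the spectral parameter, dealing correctly with the exceptional exponent $\kappa=\tfrac12$ (which is excluded) and with the Bessel resonances $\kappa-\tfrac12\in\N$, and --- for the $W^{1,1}$ assertion --- controlling $\widetilde{w}\,'$ near $0$, where it is the cancellation of the leading $x^{-1}$ singularity together with the double-integral estimates above that keeps the derivative integrable.
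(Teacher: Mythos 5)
The paper does not prove this lemma at all: it is imported verbatim from Appendix~A of \cite{ahm2}, so there is no internal proof to compare against. Your route --- Frobenius/Bessel analysis of the unperturbed equation, a Volterra (successive approximation) scheme for the perturbation with the two components integrated from opposite endpoints, and then the Fubini plus integration-by-parts bookkeeping for $\widetilde{w}$ --- is essentially the standard argument of that reference, and the outline is sound: in particular your identification $\widetilde{\Theta}_1(z,0)=2^\kappa\Gamma(\kappa+1/2)/\sqrt{\pi}$ from the Wronskian and the cancellation of the $x^{-1}$ singularity in $\widetilde{w}\,'$ both check out. Two small points. First, \eqref{eq:estphi} only gives $\Phi(z,x)=o(x^\kappa)$, not $\Phi_1=\OO(x^{\kappa+1})$; this is harmless, since $o(x^\kappa)=o(x^{-\kappa})$ already suffices for the normalization-freedom argument. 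Second, and more substantively, the final $W^{1,1}$ estimate rests entirely on the representation $\widetilde{\Theta}_2(z,x)=(\text{bounded})-x^{2\kappa-1}\int_x^c y^{-2\kappa}g(z,y)\,dy$ with $g(z,\cdot)\in L^1(0,c)$ --- this is what makes $x\widetilde{\Theta}_2(z,x)$ bounded, hence makes the products with the $L^1_{\loc}$ potential entries integrable and forces the $\widetilde{\Theta}_2$-contribution to $\widetilde{w}(z,0)$ to vanish. You invoke it ``from the construction'' but never derive it; it is exactly the conclusion that the Volterra iteration has to be set up to deliver, so it should be stated and proved as part of that step rather than assumed afterwards. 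As you yourself flag, that construction is the actual content of the lemma, and as written the proposal remains a (correct) plan rather than a complete proof.
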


Now our strategy is the usual one (cf.\ also \cite{ahm2}): We iteratively apply the double commutation method to lower $\kappa$ until we
end up in the limit circle case $|\kappa|\in[0,1/2)$. To be able to satisfy the requirement $u_+(\lam,x)=\Theta(\lam,x)$ from
Theorem~\ref{Thetalambda} we will assume that the right endpoint $b$ is regular.

\begin{lemma}\label{lemmaHgamma}
Let $H$ be given by \eqref{begindirac} with $\kappa>\frac{1}{2}$. Moreover,
let $H_\gam$ be constructed from $u_+(\lam,x)=\Theta(\lam,x)$, $\lam\in\R$, with $\gam\in(0,\infty]$ as in Theorem~\ref{Thetalambda}.
Then
\be
H_\gam=\frac{1}{\I}\sig_2\frac{d}{dx}+\frac{1-\kappa}{x}\sig_1+\widetilde{Q}(x),\quad \widetilde{Q}\in L^1_{\loc}[0,b).
\ee
Moreover, if $\kappa\ge 1$ and $\Phi(z,x)$ is normalized according to Lemma~\ref{lemPRDPhi}, then so is $\I\sigma_2\Phi_\gam(z,x)$.
\end{lemma}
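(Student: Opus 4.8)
The plan is to compute the potential of $H_\gam$ from the double commutation formula and then read off its behaviour at the singular endpoint $x=0$ from Lemma~\ref{lem:sol}. Since here $H_\gam$ is obtained by commuting at the \emph{right} endpoint with $u_+(\lam,\cdot)=\Theta(\lam,\cdot)$, the right–endpoint version of \eqref{Qgamma} (cf.\ Remark~\ref{remark1} and Theorem~\ref{Thetalambda}) shows that $H_\gam$ is associated with $\tau+Q_\gam$, where $Q_\gam=\dfrac{\Theta_1(\lam,\cdot)^2-\Theta_2(\lam,\cdot)^2}{c_\gam(\lam,\cdot)}\,\sig_1-2\,\dfrac{\Theta_1(\lam,\cdot)\Theta_2(\lam,\cdot)}{c_\gam(\lam,\cdot)}\,\sig_3$. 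I would then insert $\Theta_1(\lam,x)=x^{-\kappa}\widetilde\Theta_1(\lam,x)$, $\Theta_2(\lam,x)=x^{-\kappa+1}\widetilde\Theta_2(\lam,x)$ and write $c_\gam(\lam,x)=-x^{1-2\kappa}\widehat w(\lam,x)$; by Lemma~\ref{lem:sol}, and because $\kappa>\tfrac12$, one has $\widehat w(\lam,\cdot)\in W^{1,1}(0,c)$ with $\widehat w(\lam,0)=\widetilde w(\lam,0)>0$, so that near $x=0$ everything is written through the continuous factor $\widetilde\Theta_1(\lam,\cdot)$, the locally summable factor $\widetilde\Theta_2(\lam,\cdot)$, and the positive $W^{1,1}$ factor $\widehat w(\lam,\cdot)$.

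To isolate the $x^{-1}$–singular part I would use $c_\gam'(\lam,\cdot)=|\Theta(\lam,\cdot)|^2$, which on differentiating $\widehat w(\lam,x)=-x^{2\kappa-1}c_\gam(\lam,x)$ gives
\[
x\,\widehat w'(\lam,x)=(2\kappa-1)\widehat w(\lam,x)-\widetilde\Theta_1(\lam,x)^2-x^2\widetilde\Theta_2(\lam,x)^2 .
\]
Since $\widehat w'\in L^1(0,c)$, the left–hand side tends to $0$ as $x\downarrow0$, and together with $x\widetilde\Theta_2(\lam,x)\to0$ (a consequence of the regular–singular structure of the $\widetilde\Theta_2$–equation used below) this yields $\widetilde w(\lam,0)=\widetilde\Theta_1(\lam,0)^2/(2\kappa-1)$; in particular $\widetilde\Theta_1(\lam,0)\neq0$. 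Feeding this identity back, the $\sig_1$–coefficient of $Q_\gam$ becomes $\dfrac{\widehat w'(\lam,x)}{\widehat w(\lam,x)}-\dfrac{2\kappa-1}{x}+\dfrac{2x\,\widetilde\Theta_2(\lam,x)^2}{\widehat w(\lam,x)}$ and its $\sig_3$–coefficient becomes $\dfrac{2\,\widetilde\Theta_1(\lam,x)\widetilde\Theta_2(\lam,x)}{\widehat w(\lam,x)}$; since $\tfrac{\kappa}{x}-\tfrac{2\kappa-1}{x}=\tfrac{1-\kappa}{x}$, this is exactly the asserted form, with $\widetilde Q=Q+\big(\tfrac{\widehat w'}{\widehat w}+\tfrac{2x\widetilde\Theta_2^2}{\widehat w}\big)\sig_1+\tfrac{2\widetilde\Theta_1\widetilde\Theta_2}{\widehat w}\sig_3$.

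It then remains to verify $\widetilde Q\in L^1_{\loc}[0,b)$, which I expect to be the main obstacle. Here $Q$ is summable by hypothesis; $\widehat w'/\widehat w\in L^1_{\loc}$ because $\widehat w'\in L^1(0,c)$ (this is where $\kappa>\tfrac12$ is needed, to have $x^{2\kappa-2}\in L^1$) and $\widehat w(\lam,\cdot)$ is continuous and bounded away from $0$ near $x=0$; and $\widetilde\Theta_1\widetilde\Theta_2/\widehat w\in L^1_{\loc}$ since $\widetilde\Theta_1(\lam,\cdot)$ is continuous and $\widetilde\Theta_2(\lam,\cdot)\in L^1_{\loc}$. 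The delicate point is the summability of $x\,\widetilde\Theta_2(\lam,\cdot)^2$ at $x=0$, which does not follow from the bare statement of Lemma~\ref{lem:sol}. I would obtain it from the observation that, by the second component of $\tau\Theta=\lam\Theta$, the function $\widetilde\Theta_2(\lam,\cdot)$ solves a scalar linear equation $x\,\widetilde\Theta_2'(\lam,x)=(2\kappa-1)\widetilde\Theta_2(\lam,x)+f(x)$ with $f\in L^1_{\loc}[0,b)$ built from $Q$ and $\widetilde\Theta_1$, whose homogeneous solution $x^{2\kappa-1}$ \emph{decays} at $0$ precisely because $\kappa>\tfrac12$. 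Writing $\widetilde\Theta_2$ by variation of constants so that $|\widetilde\Theta_2(\lam,x)|\lesssim x^{2\kappa-1}\int_x^{c}t^{-2\kappa}|f(t)|\,dt$, one gets $x\widetilde\Theta_2(\lam,x)\to0$ by dominated convergence, and, after interchanging the order of integration (the weight $x^{2\kappa-1}$ pairing with $t^{-2\kappa}$ to a uniformly bounded kernel), the Schur–test type bound $\int_0^{c}x\,\widetilde\Theta_2(\lam,x)^2\,dx\lesssim\|f\|_{L^1(0,c)}^2<\infty$.

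For the last claim I would use the $\I\sig_2$–symmetry. First, evaluating the constant Wronskian $W(\Theta(\lam),\Phi(\lam))=1$ in the limit $x\downarrow0$ with the help of \eqref{eq:estphi} and Lemma~\ref{lem:sol} gives $\widetilde\Theta_1(\lam,0)\cdot\sqrt\pi/(2^\kappa\Gamma(\kappa+\tfrac12))=1$, hence $\widetilde\Theta_1(\lam,0)=2^\kappa\Gamma(\kappa+\tfrac12)/\sqrt\pi$; combined with $\widetilde w(\lam,0)=\widetilde\Theta_1(\lam,0)^2/(2\kappa-1)$ this yields $\widetilde\Theta_1(\lam,0)/\widetilde w(\lam,0)=\sqrt\pi/(2^{\kappa-1}\Gamma(\kappa-\tfrac12))$. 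Now $\I\sig_2$ is a real matrix with $(\I\sig_2)\sig_1(\I\sig_2)^{-1}=-\sig_1$ while $\sig_2$ is fixed, so $\I\sig_2H_\gam(\I\sig_2)^{-1}$ is again a perturbed radial Dirac operator of the form \eqref{begindirac}, now with angular momentum $\kappa-1\ge0$ (using $\kappa\ge1$) and potential in $L^1_{\loc}[0,b)$ by the first part; hence $\I\sig_2\Phi_\gam(z,\cdot)$ is a real entire solution of it. By \eqref{eq:3.23}, $\Phi_\gam(\lam,\cdot)=\Theta(\lam,\cdot)/c_\gam(\lam,\cdot)$ has first component $\sim-x^{\kappa-1}\widetilde\Theta_1(\lam,0)/\widetilde w(\lam,0)$ and second component $o(x^{\kappa-1})$ as $x\downarrow0$, and the same leading behaviour holds for all $z$ by \eqref{Thetalambda:phigam} (the extra terms being $O(x^\kappa)$, and $W_x(\Theta(\lam),\Phi(z))\to1$ by the computation just made). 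Therefore the second component of $\I\sig_2\Phi_\gam(z,x)$ is $\sim x^{\kappa-1}\sqrt\pi/(2^{\kappa-1}\Gamma(\kappa-\tfrac12))$ and its first component is $o(x^{\kappa-1})$ — exactly the normalization of Lemma~\ref{lemPRDPhi} for angular momentum $\kappa-1$. By the uniqueness part of that lemma, $\I\sig_2\Phi_\gam(z,x)$ is the corresponding normalized solution, completing the proof.
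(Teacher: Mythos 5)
Your argument for the first claim follows the paper's route: write $Q_\gam$ via Remark~\ref{remark1}, split off $c_\gam'/c_\gam$, and use Lemma~\ref{lem:sol} to extract the $(1-2\kappa)/x$ term from $\log c_\gam$. Where the paper simply asserts that the remaining terms are locally integrable ("one infers \dots"), you supply an actual argument for the delicate piece $x\widetilde\Theta_2(\lam,\cdot)^2\in L^1_{\loc}$ via variation of constants and a Schur-type bound; that is the right mechanism (the kernel $(\min(s,t)/\max(s,t))^{2\kappa}$ is bounded precisely because $\kappa>\tfrac12$). One wrinkle: your forcing term $f$ is \emph{not} built only from $Q$ and $\widetilde\Theta_1$ — the first component of $Q\Theta$ contains $Q_{12}\Theta_2=x^{1-\kappa}Q_{12}\widetilde\Theta_2$, so $f$ contains $x\,Q_{12}\widetilde\Theta_2$, whose integrability is part of what you are trying to prove. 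This is fixable (first show $x|\widetilde\Theta_2(\lam,x)|$ is bounded by a backward Gronwall estimate, using that $x^{2\kappa}t^{-2\kappa}\le1$ for $t\ge x$, then bootstrap), but as written the step is circular; alternatively one can lean on the construction in \cite[App.~A]{ahm2}, which is what the paper implicitly does.

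For the normalization claim your route is genuinely different from the paper's. The paper argues in the spectral parameter: any normalized regular solution of $H_\gam$ equals $\E^{g(z)}\I\sig_2\Phi_\gam(z,x)$, and since $\Phi_\gam(z,x)\sim z\,\Phi(z,x)$ as $|z|\to\infty$, the asymptotics \eqref{asymPhiprd}--\eqref{asymPhiprd2} force $g\equiv0$. You instead work locally at $x=0$: you pin down $\widetilde\Theta_1(\lam,0)=2^\kappa\Gamma(\kap+\tfrac12)/\sqrt{\pi}$ from the Wronskian, derive $\widetilde w(\lam,0)=\widetilde\Theta_1(\lam,0)^2/(2\kappa-1)$ from $c_\gam'=|\Theta(\lam,\cdot)|^2$, and match the resulting leading coefficient of $\I\sig_2\Phi_\gam$ with \eqref{eq:estphi} for angular momentum $\kappa-1$ using $\Gamma(\kap+\tfrac12)=(\kap-\tfrac12)\Gamma(\kap-\tfrac12)$; the constants do check out, and uniqueness in Lemma~\ref{lemPRDPhi} finishes it. This is more computational but also more self-contained (it does not invoke the large-$z$ asymptotics of Lemma~\ref{lemPRDPhi}); the paper's argument is shorter but relies on \eqref{asymPhiprd2} holding for the commuted operator. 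Two small points to tidy: $\widehat w'\in L^1$ only gives $x\widehat w'(x)\to0$ along a suitable sequence (which suffices, since the other terms in your identity have limits), and the convergence $W_x(\Theta(\lam),\Phi(z))\to1$ as $x\downarrow0$ should be justified via the Lagrange identity rather than pointwise evaluation, since $\widetilde\Theta_2$ is only locally integrable.
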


\begin{proof}
By \eqref{Qgamma} and Remark~\ref{remark1}, the commuted operator is of the form $H_\gam=H+Q_\gam$, where
\begin{align*}
Q_\gam(x)&=\frac{\Theta_1(\lam,x)^2
- \Theta_2(\lam,x)^2}{c_{\gam}(\lam,x)}\sig_1 -2 \frac{\Theta_1(\lam,x)
\Theta_2(\lam,x)}{c_{\gam}(\lam,x)}\sig_3\\
&=\frac{c_{\gam}'(\lam,x)}{c_{\gam}(\lam,x)}\sig_1-\frac{2\Theta_2(\lam,x)^2}{c_{\gam}(\lam,x)}\sig_1-2 \frac{\Theta_1(\lam,x)
\Theta_2(\lam,x)}{c_{\gam}(\lam,x)}\sig_3.
\end{align*}
By Lemma~\ref{lem:sol}, the denominator is of the form
\[
c_{\gam}(\lam,x)=-\frac{1}{\gam}-\int_x^b|\Theta(\lam,y)|^2\,dy=x^{-2\kappa+1}\Big(-\frac{1}{\gam}x^{2\kappa-1}+w(x)\Big),
\]
with $w\in W^{1,1}(0,b)$ and $w>0$ on $[0,b]$. Note that since $\kappa>\frac{1}{2}$, the mapping $x\mapsto x^{2\kappa-1}$ lies in $W^{1,1}(0,b)$ too and therefore $c_{\gam}(\lam,x)=x^{-2\kappa+1}\widetilde{w}(x)$, where $\widetilde{w}$ shares the same properties as $w$.
Hence
\[
\frac{c_{\gam}'(\lam,x)}{c_{\gam}(\lam,x)}=\frac{d}{dx}\log(c_{\gam}(\lam,x))=\frac{-2\kappa+1}{x}+\frac{\widetilde{w}'(x)}{\widetilde{w}(x)},
\]
with $\widetilde{w}'/\widetilde{w}\in L^1(0,b)$. Using the properties of the singular solution from Lemma~\ref{lem:sol},
one infers that $\Theta_2(\lam,.)^2/c_{\gam}(\lam,.)$ and $(\Theta_1(\lam,.)
\Theta_2(\lam,.))/c_{\gam}(\lam,.)$ lie in $L^1(0,b)$ too and the first part follows.

To see the last part, recall that every entire solution of $H_\gam$ which lies in the domain of $H_\gam$ near $a=0$ is of the form
$\E^{g(z)} \Phi_\gam(z,x)$. Since $\Phi_\gam(z,x) \sim  z \Phi(z,x)$ as $|z|\to \infty$, equations \eqref{asymPhiprd} and \eqref{asymPhiprd2} show that $g(z)\equiv 0$.
\end{proof}

\begin{remark}
The new operator $H_\gam$ has a negative angular momentum if $\kappa>1$. In this case we employ the gauge transform $\sig_2 H_\gam \sig_2$, resulting in a positive angular momentum. The corresponding system of fundamental solutions is given by $\I\sig_2\Phi_\gam(z,x)$ and $\I\sig_2\Theta_\gam(z,x)$. Note that we again have $W(\I\sig_2\Theta_\gam(z),\I\sig_2\Phi_\gam(z))=W(\Theta_\gam(z),\Phi_\gam(z))=1$ and the formula $\eqref{weylgamma2}$
remains unchanged.
If $\kappa\in(1/2,1]$, then $1-\kappa\in[0,1/2)$ but $\Phi_\gam(z,x)$ will not be normalized according to Lemma~\ref{lemPRDPhi} but will correspond to a different boundary condition
(e.g., for $\kappa=1$ it corresponds to the boundary condition $\Phi_\gam(z,0)=(1,0)$).
\end{remark}

In order to iterate this procedure we will assume that our operator is regular at $b$. Then $H_\gam$ will again be regular at $b$ as long as
$\gam\in(0,\infty)$. Moreover, by the discussion after Theorem~\ref{Thetalambda} there will be another choice $\hat\lam$ such that
$u_{\gam,+}(\hat{\lam})=\Theta_\gam(\hat{\lam})$ (i.e., such that $\Theta_\gam(\hat{\lam})$ satisfies the boundary condition of $H_\gam$ at $b$).

Since the singular Weyl function in the limit circle case will be a Herglotz function, combining these results with Theorem~\ref{Thetalambda}, one obtains by induction:
\begin{theorem}\label{final1}
Let $H$ be given by \eqref{begindirac} with $\kappa>\frac{1}{2}$, $\kappa+\frac{1}{2}\notin\N$, and let $b$ be regular. Assume also that $\Phi(z,x)$ is normalized according to Lemma~\ref{lemPRDPhi}. 
Then there is a singular Weyl function of the form
\be\label{Msum}
M(z)=P_{\floor{\kappa+\frac{1}{2}}}(z)^2 M_0(z)-\sum_{n=0}^{\floor{\kappa-\frac{1}{2}}} c_n P_n(z)^2 (\lam_n-z),
\ee
where $M_0(z)$ is a Herglotz--Nevanlinna function and
\begin{align}
c_n &=\gam_n^{-1}-W_b(\Theta_n(\lam_n),\dot{\Theta}_n(\lam_n)),\\
P_n(z) &=\prod_{j=0}^{n-1} (z-\lam_j),\quad P_0(z)=1,
\end{align}
depends on the choice of $\lam_n$ and $\gam_n$ in every step of Lemma~\ref{lemmaHgamma}.
The corresponding spectral measure is given by
\be\label{rhosum}
d\rho(t)=P_{\floor{\kappa+\frac{1}{2}}}(t)^2 d\rho_0(t),
\ee
where the measure $\rho_0$ satisfies $\int_\R d\rho_0(t)=\infty$ and $\int_\R \frac{d\rho_0(t)}{1+t^2}<\infty$.
\end{theorem}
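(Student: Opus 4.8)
\emph{Strategy.} The plan is to descend from angular momentum $\kappa$ to the limit circle case by applying the double commutation method of Theorem~\ref{Thetalambda}, in the radial form of Lemma~\ref{lemmaHgamma}, exactly $N:=\floor{\kappa+\frac{1}{2}}$ times, each application lowering the angular momentum by one. Set $H_{(0)}:=H$, $\kappa_0:=\kappa$, let $\Phi_0:=\Phi$ be the solution normalized as in Lemma~\ref{lemPRDPhi}, and let $\mathcal M_0:=M$ be the singular Weyl function of $H_{(0)}$ with respect to $\Phi_0$ and some real entire $\Theta_0$ with $W(\Theta_0,\Phi_0)=1$ (which exists by Lemma~\ref{tfae}). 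Given $H_{(n)}$ with angular momentum $\kappa_n>\frac{1}{2}$ and singular Weyl function $\mathcal M_n$, pick $\lam_n\in\R$ with $\mathcal M_n(\lam_n)=0$ and $\gam_n\in(0,\infty)$. By \eqref{Mdef}, $\mathcal M_n(\lam_n)=0$ means precisely that $\Theta_n(\lam_n)$ satisfies the boundary condition of $H_{(n)}$ at $b$, so $u_+(\lam_n)=\Theta_n(\lam_n)$, and $\Theta_n(\lam_n)\notin\hil$ because $\kappa_n>\frac{1}{2}$ (Lemma~\ref{lem:sol}); hence Theorem~\ref{Thetalambda} applies. Define $H_{(n+1)}$ to be the resulting operator $H_{\gam_n}$, composed with the gauge transform $\sigma_2\,\cdot\,\sigma_2$ when $\kappa_n>1$ (cf.\ the Remark following Lemma~\ref{lemmaHgamma}); by Lemma~\ref{lemmaHgamma} it is again of the form \eqref{begindirac}, with angular momentum $\kappa_{n+1}=|1-\kappa_n|$ and a locally integrable potential, and it remains regular at $b$ since $\gam_n<\infty$ (cf.\ \cite[Theorem~3.7]{doubledirac}).

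\emph{Running the induction.} Two things must be checked. First, an admissible $\lam_n$ exists at each stage: for $n=0$ the associated spectral measure of $\mathcal M_0=M$ is positive, so $M$ has negative residue at each of its real poles (the eigenvalues of $H$) and therefore a zero between any two consecutive ones; for $n\ge1$ the zeros of $\mathcal M_n$ coincide with the zeros of $W_b(\widetilde\Theta_{\gam}(\lam),\Theta_\gam(z))$, of which there are infinitely many (an odd number between consecutive poles) by the discussion following Theorem~\ref{Thetalambda}. Second, since the angular momenta encountered before the last step all exceed $\frac{1}{2}$, one has $\kappa_n=\kappa-n$ for $0\le n\le N-1$, and the hypothesis $\kappa+\frac{1}{2}\notin\N$ forces $\kappa_{N-1}=\kappa-N+1\in(\tfrac{1}{2},\tfrac{3}{2})$, whence $|\kappa_N|=|1-\kappa_{N-1}|<\tfrac{1}{2}$ and $H_{(N)}$ is limit circle at $0$. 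By the last part of Lemma~\ref{lemmaHgamma} (with the $\I\sigma_2$ gauge when $\kappa_n>1$), as long as $\kappa_n\ge1$ the solution $\Phi_{n+1}$ is again normalized as in Lemma~\ref{lemPRDPhi}; only in the final step may $\Phi_N$ correspond to a different (but still self-adjoint) boundary condition at $0$, which is harmless since $\Phi_N$ still carries a $z$-independent normalization.

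\emph{Tracking the Weyl functions.} Solving \eqref{weylgamma2} for $M(z)$ yields the recursion
\begin{align*}
\mathcal M_n(z)=(z-\lam_n)^2\,\mathcal M_{n+1}(z)+c_n\,(z-\lam_n),\qquad c_n=\gam_n^{-1}-W_b\bigl(\Theta_n(\lam_n),\dot\Theta_n(\lam_n)\bigr),
\end{align*}
which remains valid after the gauge transform (Remark following Lemma~\ref{lemmaHgamma}). Since $H_{(N)}$ is limit circle at $0$ and regular at $b$, we may — after replacing $\Theta_N$ by the usual choice, i.e.\ adding a real entire multiple of $\Phi_N$, which amounts to adding a real entire function to $\mathcal M_0=M$ — assume that $\mathcal M_N=:M_0$ is a Herglotz--Nevanlinna function (cf.\ the comment after Theorem~\ref{IntR}). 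Iterating the recursion from $n=N-1$ down to $n=0$, and using $P_n(z)=\prod_{j=0}^{n-1}(z-\lam_j)$, $N=\floor{\kappa+\frac{1}{2}}$ and $N-1=\floor{\kappa-\frac{1}{2}}$ (valid because $\kappa+\frac{1}{2}\notin\N$), gives exactly \eqref{Msum} with the stated $c_n$. Finally, the second summand in \eqref{Msum} is a real polynomial, hence contributes nothing to $\im M(\lam+\I0)$, while $P_{\floor{\kappa+\frac{1}{2}}}(\lam)^2\ge0$; the Stieltjes--Liv\v{s}i\'{c} inversion formula \eqref{defrho} then yields \eqref{rhosum}, where $\rho_0$ is the spectral measure of $M_0$. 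That $\int_\R d\rho_0=\infty$ follows since $H_{(N)}$, being a Dirac operator (limit circle at $0$, regular at $b$), has purely discrete spectrum accumulating at $\pm\infty$ with norming constants bounded away from zero, and $\int_\R(1+t^2)^{-1}d\rho_0(t)<\infty$ holds for the spectral measure of any Herglotz function.

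\emph{Main obstacle.} The explicit form of $\mathcal M_{n+1}$ and the final inversion computation are routine given the earlier results; the real work is the bookkeeping over the $N$ steps: verifying that each commuted operator stays in the class \eqref{begindirac} with locally integrable potential, that the left endpoint normalization (and hence the precise form of $\mathcal M_n$) is transported correctly through the gauge transforms, that an admissible $\lam_n$ is always available while $b$ stays regular, and that the bottom Weyl function is genuinely of Herglotz type. These ingredients are precisely what Lemma~\ref{lemmaHgamma}, the Remark following it, and the discussion after Theorem~\ref{Thetalambda} supply; organizing them into a clean induction is the crux.
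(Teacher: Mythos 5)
Your proof is correct and follows essentially the same route as the paper: iterate Theorem~\ref{Thetalambda}/Lemma~\ref{lemmaHgamma} to lower the angular momentum one unit at a time, unwind the recursion obtained by solving \eqref{weylgamma2} for $M$, and identify the bottom Weyl function as a Herglotz function once the left endpoint has become limit circle. The only place the paper is more explicit is the final step when $\kappa-\floor{\kappa+\frac12}+1\in(\frac12,1)$, where it justifies the Herglotz property of $M_0$ by verifying the $z$-independence of the Wronskians $W_0(\Phi_N(z),\Phi_N(\hat z))$, $W_0(\Theta_N(z),\Theta_N(\hat z))$, $W_0(\Theta_N(z),\Phi_N(\hat z))$ at $x=0$ and deducing $\im M_0(z)=\im(z)\int_0^b|\psi_0(z,x)|^2dx$ --- precisely the point you compress into the remark that ``$\Phi_N$ still carries a $z$-independent normalization.''
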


\begin{proof}
As pointed out before we can reduce $\kappa$ by $1$ using the above method until we reach the case $\kappa\in(1/2,3/2)$.
If $\kappa\in(1,3/2)$, the above procedure will lead us to $\kappa\in[0,1/2)$ with a properly normalized $\Phi$ and the theorem
is proven. In the case $\kappa\in(1/2,1)$ Lemma~\ref{lemmaHgamma} will give us an operator $H_\gamma$ of type \eqref{begindirac}
with $\kappa\to 1-\kappa$. Moreover, as in the proof of the previous lemma we see
\[
\widetilde{\Theta}_\gam(\lam,x)=x^{\kappa-1} \begin{pmatrix}C\\ 0\end{pmatrix} +o(x^{\kappa-1}), \qquad C\ne 0,
\]
and by inspection of \eqref{Thetalambda:phigam} (see also \eqref{eq:3.23}) we see
\[
\Phi_\gam(z,x)=x^{\kappa-1} \begin{pmatrix}C\\ 0\end{pmatrix} +o(x^{\kappa-1}).
\]
Moreover, from $W(\Theta_\gam(z),\Phi_\gam(z))=1$ we conclude
\[
\Theta_\gam(z,x)=x^{1-\kappa} \begin{pmatrix}0\\ C^{-1}\end{pmatrix} +o(x^{1-\kappa}).
\]
The last two equations imply
\[
W_0(\Theta_\gam(z),\Theta_\gam(\hat{z})) =0, \qquad W_0(\Theta_\gam(z),\Phi_\gam(\hat{z})) =1,
\]
whereas \eqref{wronskis} implies
\[
W_0(\Phi_\gam(z),\Phi_\gam(\hat{z})) =0
\]
Hence a direct computation (cf.\ the proof of Theorem A.7 in \cite{kst2}) shows
\[
\im(M_\gam(z)) = \im(z) \int_0^b |\psi_\gam(z,x)|^2 dx,
\]
which implies that the corresponding Weyl function $M_\gam(z)$ is a Herglotz--Nevanlinna function as required.
\end{proof}

As another consequence we obtain

\begin{corollary}\label{final2}
Let $H$ be given by \eqref{begindirac} with $\kappa>\frac{1}{2}$, $\kappa+\frac{1}{2}\notin\N$, and let $b$ be regular.
Then there is a corresponding system of entire solutions $\Phi(z,x)$, $\Theta(z,x)$ with $\Phi$ as in Lemma~\ref{lemPRDPhi}
such that $M(z)\in N_{\kappa_0}^\infty$ with $\kappa_0=\floor{\kappa+\frac{1}{2}}$.
\end{corollary}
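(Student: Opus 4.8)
The plan is to read off the Nevanlinna class of $M(z)$ directly from the representation \eqref{Msum} obtained in Theorem~\ref{final1}, by a simple degree count together with Theorem~\ref{thm:nkap}. First I would recall that by Theorem~\ref{final1} there is a singular Weyl function of the form
\[
M(z)=P_{\floor{\kappa+\frac{1}{2}}}(z)^2\, M_0(z)-\sum_{n=0}^{\floor{\kappa-\frac{1}{2}}} c_n\, P_n(z)^2\,(\lam_n-z),
\]
with $M_0$ a Herglotz--Nevanlinna function and $d\rho(t)=P_{\floor{\kappa+\frac{1}{2}}}(t)^2\,d\rho_0(t)$, where $\rho_0$ is the spectral measure of $M_0$ and satisfies $\int_\R d\rho_0(t)=\infty$ and $\int_\R(1+t^2)^{-1}d\rho_0(t)<\infty$. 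Write $k:=\floor{\kappa+\tfrac12}=\kappa_0$ for brevity, so that $\deg P_k=k$ and $P_k(t)^2$ is a polynomial of degree $2k$ that is nonnegative on $\R$.

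The key step is to apply Theorem~\ref{thm:nkap}. That theorem says that (for a suitable choice of $\Theta$) one has $M(z)\in N_{\kappa'}^\infty$ for some $\kappa'\le k$ if and only if $(1+\lam^2)^{-k-1}\in L^1(\R,d\rho)$, and that $\kappa'=k$ as soon as $(1+\lam^2)^{-k}\notin L^1(\R,d\rho)$. So I would verify the two integrability conditions for the measure $d\rho=P_k^2\,d\rho_0$. For the first: since $P_k(t)^2=\OO(1+t^2)^k$ as $|t|\to\infty$, we get
\[
\int_\R \frac{d\rho(t)}{(1+t^2)^{k+1}}=\int_\R \frac{P_k(t)^2}{(1+t^2)^{k+1}}\,d\rho_0(t)\le C\int_\R\frac{d\rho_0(t)}{1+t^2}<\infty,
\]
using the bound on $P_k^2$ away from a compact set and finiteness of $\rho_0$ on compacts (the latter being automatic for a spectral measure). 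For the second: because $P_k(t)^2\sim (\text{const}>0)\cdot t^{2k}$ as $|t|\to\infty$, we have $P_k(t)^2/(1+t^2)^k\to \text{const}>0$, hence
\[
\int_\R \frac{d\rho(t)}{(1+t^2)^{k}}=\int_\R \frac{P_k(t)^2}{(1+t^2)^{k}}\,d\rho_0(t)
\]
is comparable, outside a compact set, to $\int_\R d\rho_0(t)=\infty$, so $(1+\lam^2)^{-k}\notin L^1(\R,d\rho)$. Combining the two, Theorem~\ref{thm:nkap} yields $M(z)\in N_k^\infty=N_{\kappa_0}^\infty$ with $\kappa_0=\floor{\kappa+\tfrac12}$. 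Finally I would note that the solution $\Phi$ here is the one normalized according to Lemma~\ref{lemPRDPhi}, as tracked through Lemma~\ref{lemmaHgamma} and Theorem~\ref{final1}, and that the accompanying $\Theta$ is precisely the one for which Theorem~\ref{thm:nkap} applies, so the statement about the "corresponding system of entire solutions" is met.

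The only mild subtlety — and the place I would be careful rather than the place of real difficulty — is matching the normalization of $\Phi$ used in Theorem~\ref{thm:nkap} (which fixes $\Phi$ and then chooses $\Theta$) with the one produced by the iterated commutation in Theorem~\ref{final1}; here one uses that changing $\Theta$ by $\tilde\Theta=\E^{-g(z)}\Theta-f(z)\Phi$ only shifts $M$ by an entire function and an $\E^{\pm g}$ factor, which does not affect membership in $N_{\kappa_0}^\infty$ (the generalized pole at $\infty$ and its type are unchanged). Everything else is the degree count above, so there is no genuine obstacle; the corollary is essentially a bookkeeping consequence of Theorem~\ref{final1} and Theorem~\ref{thm:nkap}.
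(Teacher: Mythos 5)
Your proposal is correct and follows essentially the same route as the paper: the paper's proof likewise combines the representation $d\rho=P_{\floor{\kappa+1/2}}^2\,d\rho_0$ from Theorem~\ref{final1} with the conditions $\int_\R(1+t^2)^{-1}d\rho_0<\infty$ and $\int_\R d\rho_0=\infty$ and then invokes Theorem~\ref{thm:nkap}. You merely spell out the degree count and the normalization bookkeeping that the paper leaves implicit.
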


\begin{proof}
Combining \eqref{rhosum} with $\int (1+t^2)^{-1}d\rho_0(t)<\infty$ and $\int d\rho_0(t)=\infty$, the claim follows by applying Theorem~\ref{thm:nkap} with $\kappa_0=\floor{\kappa+\frac{1}{2}}$.
\end{proof}

\begin{remark}
It is easy to see that the assumption that $b$ is regular is superfluous. Indeed, observe that Lemma~7.1 from \cite{singdirac} shows that
the asymptotics of $M(z)$ as $\im(z)\to\infty$ depend only on the behavior of the potential near $a=0$. Furthermore, \cite[Lemma C.2]{kst2}
shows that the required integrability properties of the spectral measure $d\rho$ depend only on the asymptotics of $M(z)$ and hence
also depend only on the behavior of the potential near $a=0$.
\end{remark}

This generalizes Theorem~8.4 from \cite{singdirac} where the bound $\kappa_0\le\ceil{\kappa}$ was given.

\begin{remark}
There is a straightforward connection with the standard theory for radial Schr\"odinger operators
 if our Dirac operator is supersymmetric, that is, $q_{\rm el}= q_{\rm sc}=0$ (see \cite[Section 3]{singdirac}). Using the results of this section, we can extend Theorem 4.5 from \cite{kt} to Schr\"odinger operators defined in $L^2(0,b)$ by differential expressions 
\be\label{eq:4.10}
\ell=a_q a_q^*:= \left( - \frac{d}{dx} + \frac{\kappa}{x} + q_{\rm am}(x)\right)\left( \frac{d}{dx} + \frac{\kappa}{x} + q_{\rm am}(x)\right).
\ee
Note that in the case $q_{\rm am }\in L^2_{\loc}[0,b)$ this differential expression can be written in the potential form 
\be
\ell=-\frac{d^2}{dx^2}+\frac{\kappa(\kappa+1)}{x^2}+q_+,\quad q_+(x)=\frac{2\kappa}{x}q_{am}(x)+q_{am}(x)^2-q_{am}'(x),
\ee
where $q_+$ is a $W^{-1,2}_{\loc}[0,b)$ distribution. For further details we refer to \cite{ekt2}.
\end{remark}

\bigskip
\noindent
{\bf Acknowledgments.}
We are very grateful to Annemarie Luger for helpful discussions.
A.B., J.E., and G.T.\ gratefully acknowledge the stimulating atmosphere at the {\em Institut Mittag-Leffler} during summer 2014 where parts of this paper were written during the workshop on {\em Modern aspects of the Titchmarsh--Weyl m-function and its multidimensional analogues}.

\end{document}